\setlist[itemize]{noitemsep,nolistsep}
\def\cC{\mathscr{C}}
\def\Z{{\bf Z}}
\def\C{{\bf C}}
\def\R{{\bf R}}
\def\Q{{\bf Q}}
\def\phi{\varphi}
\DeclareMathOperator{\ch}{ch}
\DeclareMathOperator{\Sym}{Sym}
\DeclareMathOperator{\td}{td}
\DeclareMathOperator{\tp}{tp}
\def\llra{\hbox to 10mm{\rightarrowfill}}
\def\lllra{\hbox to 15mm{\rightarrowfill}}
\def\llla{\hbox to 10mm{\leftarrowfill}}
\def\lllla{\hbox to 15mm{\leftarrowfill}}
\def\hk{hyper-K\"ahler}
\def\hKm{hyper-K\"ahler manifold}
\newtheorem{lemm}{Lemma}[section]
\newtheorem{coro}[lemm]{Corollary}
\newtheorem{prop}[lemm]{Proposition}
\theoremstyle{remark}
\newtheorem{rema}[lemm]{Remark}
\newtheorem{conj}[lemm]{Conjecture}
\def\@tocline#1#2#3#4#5#6#7{\relax
  \ifnum #1>\c@tocdepth 
  \else
    \par \addpenalty\@secpenalty\addvspace{#2}%
    \begingroup \hyphenpenalty\@M
    \@ifempty{#4}{%
      \@tempdima\csname r@tocindent\number#1\endcsname\relax
    }{%
      \@tempdima#4\relax
    }%
    \parindent\z@ \leftskip#3\relax \advance\leftskip\@tempdima\relax
    \rightskip\@pnumwidth plus4em \parfillskip-\@pnumwidth
    #5\leavevmode\hskip-\@tempdima
      \ifcase #1
       \or\or \hskip 1em \or \hskip 2em \else \hskip 3em \fi%
      #6\nobreak\relax
    \dotfill\hbox to\@pnumwidth{\@tocpagenum{#7}}\par
    \nobreak
    \endgroup
  \fi}
\def\lll{\ensuremath{\mathsf l}}
\def\mm{\ensuremath{\mathsf m}}
\let\@wraptoccontribs\wraptoccontribs
\title{Numerical invariants of hyper-K\"ahler manifolds}
\begin{document}

\author[O.~Debarre]{Olivier Debarre}
\address{\parbox{0.9\textwidth}{Universit\'e   Paris Cit\'e, CNRS, Institut de Math\'ematiques de Jussieu-Paris rive gauche,\\[1pt]
8 Place Aur\'elie Nemours, 75013 Paris, France
\vspace{1mm}}}
\email{{olivier.debarre@imj-prg.fr}}

\address{\parbox{0.9\textwidth}{Shanghai Center for Mathematical Sciences \& School of Mathematical Sciences,\\[1pt] Fudan University, Shanghai 200438, China
\vspace{1mm}}}
\email{chenjiang@fudan.edu.cn}

\contrib[with an appendix by]{Chen Jiang}

\thanks{This project has received funding from the European
Research Council (ERC) under the European
Union's Horizon 2020 research and innovation
programme (Project HyperK --- grant agreement 854361).}

\begin{abstract}
We study  various constraints on the Beauville quadratic form and the  Huybrechts--Riemann--Roch  polynomial for \hKm s, mostly in dimension 6 and in the presence of an isotropic class. 

In an appendix, Chen Jiang proves that in general, the  Huybrechts--Riemann--Roch polynomial can  always be written as a linear combination with nonnegative coefficients of certain explicit polynomials with positive coefficients. This implies that the Huybrechts--Riemann--Roch polynomial satisfies a curious   symmetry property.
\end{abstract}

\maketitle

 \setcounter{tocdepth}{1}


\section{Introduction}

A {\em \hKm} is a simply connected compact K\"ahler manifold $X$  whose space of holomorphic 2-forms is spanned by a symplectic form.\ Its dimension  is necessarily an even number $2n$.\ A fundamental tool in the study of \hKm s is the {\em Beauville} form, 
 a canonical integral nondivisible nondegenerate quadratic form $q_X$ on the free abelian group $H^2(X,\Z)$ (\cite[th.~5]{bea}).\ Its signature is $(3,b_2(X)-3)$
 and  there is a positive rational number $c_X$ (the {\em Fujiki constant}) such that (\cite[Theorem~4.7]{fuj})
\begin{equation}\label{defqx}
\forall \alpha\in H^2(X,\Z)\qquad \int_X \alpha^{2n}=c_Xq_X(\alpha)^n.
\end{equation}

There exists a polynomial $ P_{RR,X}(T)$ (the {\em Huybrechts--Riemann--Roch polynomial})  with rational coefficients, leading term $\frac{c_X}{(2n)!}T^n$ and constant term $n+1$, such that, for every line bundle~$L$ on $X$,  one has (\cite[Corollary~3.18]{huy})
\begin{equation}\label{hrr}
\chi(X,L)=P_{RR,X}(q_X(c_1(L)))
.
\end{equation}
The  objects $q_X$, $c_X$, and $ P_{RR,X}(T)$ only depend on  the topology of $X$ and are in particular  deformation invariant.\
 
 In this note, we first prove in Section~\ref{sec:intro} a curious symmetry property for the polynomial $ P_{RR,X}(T)$ (Proposition~\ref{prop15}).\ This property also follows from a strengthening of \cite[Theorem~1.1]{jia} (which says that the polynomial $ P_{RR,X}(T)$ has positive coefficients)
 proved in the appendix by Chen Jiang.

  We then study a conjecture made in~\cite[Conjecture~1.4]{dhmv} (and proved in \cite[Theorem~1.5]{dhmv} when~$n=2$) about the possible values of $ P_{RR,X}(T)$ when the quadratic form~$q_X$ represents $0$.\ There exists then a nonzero class $\lll\in H^2(X,\Z)$   such that
$
\int_X\lll^{2n}=0$ and, for any $\mm\in H^2(X,\Z)$, if one writes $\int_X\lll^n\mm^n=a n!$, the number $a$ is necessarily an integer (\cite[Lemma~2.2]{dhmv}).\ The   conjecture deals with the case  $a=1$ (which happens for   $n$th punctual Hilbert schemes of  K3 surfaces and \hKm s of OG10 deformation type).

\begin{conj}[Debarre--Huybrechts--Macr\`i--Voisin]\label{conjnum}
Let $X$ be a \hk\ manifold of dimension $2n$ with  classes $\lll,\mm\in H^2(X,\Z)$ such that
\[
\int_X\lll^{2n}=0\quad\textnormal{and}\quad\int_X\lll^n\mm^n=n!.
\]
Then $c_X=(2n-1)!!$ and the Huybrechts--Riemann--Roch polynomial of $X$  is
\begin{equation}\label{valp}
 P_{RR,X}(T)= \binom{\frac12 T+1+n}{n}.
\end{equation}
\end{conj}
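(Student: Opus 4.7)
The plan is to proceed in three steps.

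\emph{Step 1: polarized Fujiki identity.} Apply~\eqref{defqx} to the class $s\lll+t\mm$. Using $q_X(\lll)=0$, the right-hand side becomes $c_X(2st\,q_X(\lll,\mm)+t^2 q_X(\mm))^n$. Expanding both sides as polynomials in~$(s,t)$ and comparing the coefficient of $s^n t^n$ gives
\[
\int_X\lll^n\mm^n=\frac{c_X\cdot 2^n(n!)^2}{(2n)!}\,q_X(\lll,\mm)^n=\frac{c_X\cdot n!}{(2n-1)!!}\,q_X(\lll,\mm)^n,
\]
so the hypothesis $\int_X\lll^n\mm^n=n!$ yields the basic identity
\[
c_X\cdot q_X(\lll,\mm)^n=(2n-1)!!.
\]

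\emph{Step 2: show $q_X(\lll,\mm)=1$.} Write $q:=q_X(\lll,\mm)\in\tfrac12\Z$ and $Q:=q_X(\mm)\in\Z$. By~\eqref{hrr} combined with the polarized Fujiki formula,
\[
\chi(X,a\lll+b\mm)=P_{RR,X}(2abq+b^2Q)\in\Z\qquad\text{for all }a,b\in\Z.
\]
Writing $P_{RR,X}(T)=\sum_{j=0}^n\frac{C_j}{(2n-2j)!}T^{n-j}$ with $C_j$ the generalized Fujiki constant of $\td_{2j}(X)$, the identity from Step~1 forces the leading coefficient of $a\mapsto\chi(X,a\lll+\mm)$ to be $\tfrac{1}{n!}$ regardless of~$q$, so the leading-term integrality alone does not pin $q$ down. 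To rule out the spurious values $q\in\tfrac12\Z\setminus\{1\}$, the plan is to combine (i)~the integrality of $\chi(X,a\lll+b\mm)$ for all $a,b\in\Z$, which produces a full system of congruences on the $C_j$; (ii)~the positivity refinement proved by Chen Jiang in the appendix, which writes $P_{RR,X}$ as a nonnegative combination of explicit polynomials with strictly positive coefficients and hence gives sharp lower bounds on each $C_j$; and (iii)~a priori constraints on the Fujiki constant (e.g.\ $q=1/2$ would force the very large value $c_X=2^n(2n-1)!!$, presumably incompatible with such bounds). This is the crux of the argument and the main obstacle; in the case $n=2$ it required a delicate lattice-theoretic argument in~\cite[Theorem~1.5]{dhmv}, and in higher dimensions the new positivity input from the appendix is expected to be decisive.

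\emph{Step 3: identify~$P_{RR,X}$.} Once $c_X=(2n-1)!!$ is established, the leading coefficient $\tfrac{1}{2^n n!}$ and constant term~$n+1$ of $P_{RR,X}$ are fixed, leaving $n-1$ intermediate coefficients to determine. One then computes sufficiently many Euler characteristics $\chi(X,a\lll+b\mm)=P_{RR,X}(2ab+b^2Q)\in\Z$ and combines them with the nonnegative decomposition from the appendix to reduce to a finite linear problem whose unique solution should be $P_{RR,X}(T)=\binom{T/2+n+1}{n}$; one checks directly that this polynomial has constant term $n+1$ and leading coefficient $\tfrac{1}{2^n n!}$, consistent with the previous steps.
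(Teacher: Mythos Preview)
The statement you are attempting to prove is a \emph{conjecture}; the paper does not prove it in general, and neither does your proposal. The paper establishes only partial results: the case $n=2$ was settled in~\cite[Theorem~1.5]{dhmv}, and for $n=3$ the paper's Proposition~\ref{prop43} shows $q_X(\lll,\mm)=1$, $c_X=15$, and narrows $P_{RR,X}$ down to two candidates indexed by $n_X\in\{2,6\}$, but cannot exclude $n_X=2$. For $n\ge 4$ nothing is proved.

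Your Step~1 is correct and matches the paper's identity~\eqref{cxa}. But Steps~2 and~3 are not proofs; they are outlines punctuated by ``is expected to be decisive'' and ``should be''. Two concrete gaps:

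\textbf{Step 2.} You hope that integrality of Euler characteristics, combined with the positivity decomposition of the appendix, will force $q_X(\lll,\mm)=1$. The paper actually carries out exactly this kind of analysis for $n=3$ (Lemmas~\ref{le1} and~\ref{le2}, proof of Proposition~\ref{prop43}) using divisibility constraints from Proposition~\ref{prop12} and the symmetry relation~\eqref{symmp}. This does succeed in proving $q_X(\lll,\mm)=1$ when $n=3$, but the argument is specific to that dimension and there is no indication that the appendix's positivity (which concerns nonnegativity of certain coefficients, not sharp bounds) closes the gap for general~$n$. Your claim that $q=1/2$ would force $c_X=2^n(2n-1)!!$ is also off: since the Beauville bilinear form is integral, $q_X(\lll,\mm)\in\Z$, not merely $\tfrac12\Z$.

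\textbf{Step 3.} Even granting $c_X=(2n-1)!!$, the remaining coefficients of $P_{RR,X}$ are \emph{not} determined by the integrality and positivity constraints you list. The paper's own Proposition~\ref{prop43} is a counterexample to your strategy: for $n=3$ all of these constraints are imposed, yet two distinct polynomials survive (corresponding to $n_X=2$ and $n_X=6$), and only one of them is~\eqref{valp}. So the ``finite linear problem'' you describe does not have a unique solution, and additional input (presumably geometric) would be needed to finish.
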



 Our main result is the following result (Proposition~\ref{prop43}) which almost  proves the conjecture (one would need to additionally prove that the case $n_X=2$ does not happen) in dimension~6 ($n=3$).

\begin{prop}
Let $X$ be a \hk\ manifold of dimension $6$ with  classes $\lll,\mm\in H^2(X,\Z)$ such that
$
\int_X\lll^{6}=0$ and $\int_X\lll^3\mm^3=3!
$.\ We have $q_X(\lll,\mm)=1$, 
  the quadratic form~$q_X$ is even, the Fujiki constant  $c_X$ is $15$, and
$$P_{RR,X}(T)=
  \binom{\frac{T}{2}+4}{3}-\frac{6-n_X}{16}T^2,$$
  where $n_X \in\{2,6 \}$.
  \end{prop}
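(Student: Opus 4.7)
The plan is to (1) use Fujiki's formula and its polarization to extract the identity $c_X\,q_X(\lll,\mm)^3=15$, (2) separate $c_X$ from $q_X(\lll,\mm)$ using integrality of Huybrechts--Riemann--Roch together with the positivity theorem of the appendix, (3) deduce evenness of $q_X$, and (4) pin down $P_{RR,X}(T)$ up to the parameter $n_X$.

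Since $c_X>0$, the identity $\int_X\lll^{6}=c_X\,q_X(\lll)^3=0$ forces $q_X(\lll)=0$. Setting $\alpha:=q_X(\lll,\mm)$ and $\beta:=q_X(\mm)$ and expanding
\[
\int_X(\lll+t\mm)^{6} = c_X\bigl(2t\alpha+t^2\beta\bigr)^3 = c_X\bigl(8\alpha^3t^3+12\alpha^2\beta t^4+6\alpha\beta^2 t^5+\beta^3 t^6\bigr),
\]
the coefficient of $t^3$ yields $\binom{6}{3}\int_X\lll^3\mm^3=8c_X\alpha^3$, i.e.\ $c_X\alpha^3=15$. The $t^1$ and $t^2$ coefficients moreover give $\int_X\lll^5\mm=\int_X\lll^4\mm^2=0$, while the higher coefficients determine $\int_X\lll^{6-k}\mm^k$ for $k\geq 4$ as explicit polynomials in $\alpha$ and $\beta$.

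To separate $\alpha$ from $c_X$, I use Huybrechts--Riemann--Roch integrality: for every $\delta\in H^2(X,\Z)$, a standard deformation argument realizes $\delta$ as $c_1$ of a line bundle on a deformation of $X$, so $P_{RR,X}(q_X(\delta))\in\Z$. Writing
\[
P_{RR,X}(T) = \frac{T^3}{48\alpha^3}+p_2T^2+p_1T+4
\]
and applying the main theorem of Chen Jiang's appendix --- which expresses $P_{RR,X}$ as a nonnegative combination of explicit polynomials with positive coefficients --- the identity $c_X\alpha^3=15$ combined with Riemann--Roch integrality evaluated on the rank-$2$ sublattice $\Z\lll+\Z\mm$ and the positivity constraints forces $\alpha=1$, whence $c_X=15$.

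With $c_X=15$, compare with $\binom{T/2+4}{3}=\frac{T^3}{48}+\frac{3T^2}{8}+\frac{13T}{6}+4$: the difference $P_{RR,X}(T)-\binom{T/2+4}{3}$ is a polynomial of degree at most $2$ with vanishing constant term. The curious symmetry of Proposition~\ref{prop15} forces its linear part to vanish, so $p_1=13/6$, and writing $p_2-\tfrac{3}{8}=-\tfrac{6-n_X}{16}$ defines the integer-valued invariant $n_X$. Integrality of $P_{RR,X}$ on $q_X(H^2(X,\Z))$ then forces $q_X(H^2(X,\Z))\subseteq 2\Z$ (evenness of $q_X$), and combined with Jiang's positivity constraints restricts $n_X$ to $\{2,6\}$. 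The main obstacle is the step forcing $\alpha=1$: the equation $c_X\alpha^3=15$ admits infinitely many rational solutions, and eliminating all but $\alpha=1$ requires a delicate combined use of Riemann--Roch integrality on $\Z\lll+\Z\mm$ and the positivity theorem of the appendix; once this is accomplished, the remaining assertions follow by comparatively direct computation.
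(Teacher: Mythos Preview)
Your proposal has genuine gaps at three places, and the logic is partially inverted.

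\textbf{Forcing $\alpha=1$.} You derive $c_X\alpha^3=15$ correctly, but then merely assert that Riemann--Roch integrality on $\Z\lll+\Z\mm$ together with Jiang's positivity ``forces $\alpha=1$,'' admitting yourself that this is the main obstacle. This is not a proof. The paper's argument is concrete and different: Proposition~\ref{prop12} (a Vandermonde argument) shows that the leading coefficient $a_3=\frac{1}{48\alpha^3}$ lies in $\frac{1}{2^3C_3}\Z$, which gives $\alpha^3\mid 2^4\cdot 3^2\cdot 5$ and hence $\alpha\in\{1,2\}$. The case $\alpha=2$ is then eliminated by a separate modular-arithmetic argument showing that $q_X$ would have to take only values divisible by~$4$, contradicting nondivisibility. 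Jiang's positivity plays no role here.

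\textbf{The symmetry step.} Your claim that the symmetry of Proposition~\ref{prop15} forces the linear part of $P_{RR,X}(T)-\binom{T/2+4}{3}$ to vanish is false. The symmetry relation~\eqref{symmp} is centered at $-n_X$, while $\binom{T/2+4}{3}$ has roots $-4,-6,-8$ and hence satisfies that relation only when $n_X=6$. What the symmetry (via Corollary~\ref{coro23}) actually gives is $p_1=\frac{n_X^2}{24}+\frac{4}{n_X}$, which depends on $n_X$; it equals $\frac{13}{6}$ precisely when $n_X\in\{2,6\}$ (the positive roots of $n_X^3-52n_X+96=0$). So you have the implication backwards: $p_1=\frac{13}{6}$ is a \emph{consequence} of $n_X\in\{2,6\}$, not an input to it.

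\textbf{Restricting $n_X$.} Jiang's positivity (equivalently, the Hitchin--Sawon bound $A_X<1$) only gives $n_X<4\sqrt[3]{6}\approx 7.27$, i.e.\ $n_X\le 7$. It does not single out $\{2,6\}$. The paper obtains the restriction by a case-by-case analysis: for each $n_X\in\{1,\dots,7\}$ one imposes that $P_{RR,X}(q)$ be an integer for every value $q$ represented by $q_X$, which reduces to the divisibility $16\mid q(q+n_X)(q+2n_X)$; combined with the parity constraint from Lemma~\ref{le2} and the fact that the gcd of represented values is $1$ or $2$, this eliminates $n_X\in\{1,3,4,5,7\}$ and simultaneously forces $q_X$ to be even. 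This arithmetic is the heart of the proof and is entirely absent from your outline.
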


 One may make the following more ambitious  conjecture for small positive values of $a$ (it is verified for all known examples of \hk\ manifolds and proved in general when  $n=2$ in~\cite[Theorem~9.3 and Theorem~1.5]{dhmv}).
 
 \begin{conj}\label{c2}
Let $X$ be a \hk\ manifold of dimension $2n$ with  classes $\lll,\mm\in H^2(X,\Z)$ such that
\[
\int_X\lll^{2n}=0\quad\textnormal{and}\quad\int_X\lll^n\mm^n=a n!\ , \quad\textnormal{with }\ a\in\{1,\dots,n\}.
\]
Then $a=1$ and $X$   is   of K3$^{[n]}$ or   OG10 deformation type.
\end{conj}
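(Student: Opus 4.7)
The plan is to decouple the conjecture into two parts: a numerical step proving $a=1$, and a classification step identifying the deformation type.

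The numerical step begins with the Fujiki relation applied to the pencil $\lll+t\mm$. Expanding
\[
\int_X (\lll+t\mm)^{2n} \;=\; c_X\bigl(2t\,q_X(\lll,\mm)+t^2 q_X(\mm)\bigr)^n
\]
and comparing coefficients of $t^k$ for $k<n$ forces $\int_X \lll^{2n-k}\mm^k=0$, while the coefficient of $t^n$ gives
\[
\binom{2n}{n}\int_X \lll^n\mm^n \;=\; c_X\cdot 2^n\, q_X(\lll,\mm)^n,
\]
which simplifies to $a=c_X q_X(\lll,\mm)^n/(2n-1)!!$. A divisibility argument in the spirit of \cite[Lemma~2.2]{dhmv}, exploiting the isotropy of $\lll$ and the vanishing of the lower-order mixed integrals, should then show that after replacing $\mm$ by an appropriate integral class one may assume $q_X(\lll,\mm)=1$, so that $c_X=a(2n-1)!!$.

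To rule out the intermediate values $a\in\{2,\dots,n\}$, I would combine three ingredients. First, Guan's bounds on $b_2(X)$ together with the constraint $c_X=a(2n-1)!!$ restrict the admissible pairs $(b_2,c_X)$. Second, the strengthened positivity statement for $P_{RR,X}$ proved in Jiang's appendix yields a decomposition into nonnegative combinations of explicit positive polynomials, and one can try to show that no such decomposition is compatible with leading coefficient $a(2n-1)!!/(2n)!$ and constant term $n+1$ unless $a=1$. Third, Riemann--Roch integrality applied to classes $L$ with small values of $q_X(c_1(L))$, combined with the vanishing of mixed integrals derived above, should produce congruence obstructions modulo small primes of $(2n)!$ that become contradictory when $2\le a\le n$. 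The case $n=3$ handled in Proposition~\ref{prop43} serves as the template: there the analysis of the correction term $-\tfrac{6-n_X}{16}T^2$ cuts out the admissible $a$ values, and one expects analogous divisibility miracles in higher $n$ to force $a=1$.

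Once $a=1$, Conjecture~\ref{conjnum} would give $c_X=(2n-1)!!$ and $P_{RR,X}(T)=\binom{T/2+1+n}{n}$, matching the K3$^{[n]}$ and OG10 values. The main obstacle is the \emph{topological identification} of the deformation type from this numerical data. Markman's monodromy results show that the Fujiki constant and Huybrechts--Riemann--Roch polynomial do not suffice in general to pin down the deformation class; one must in addition recover the isometry class of the Beauville lattice $(H^2(X,\Z),q_X)$, and then invoke Markman's classification theorem for manifolds of K3$^{[n]}$-type (and its OG10 analogue). The strategy would be to use the existence of the primitive isotropic class $\lll$ together with the vanishings $\int_X \lll^{2n-k}\mm^k=0$ for $k<n$ to constrain the discriminant group and the embedding of $\Z\lll\oplus\Z\mm$ into $H^2(X,\Z)$, recovering a copy of the hyperbolic plane; combining this with the Fujiki constant should narrow the lattice down to the K3$^{[n]}$ or OG10 form. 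This lattice-to-geometry passage, which requires essentially new geometric input beyond numerical analysis, is the principal barrier to a complete proof for general $n$.
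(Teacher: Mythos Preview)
The statement you are attempting to prove is labeled as a \emph{conjecture} in the paper (Conjecture~\ref{c2}); the paper does not claim or provide a proof of it. The paper only establishes partial numerical constraints in dimension~$6$ (Propositions~\ref{prop43} and~\ref{prop45}), and even there the case $a=2$ is not excluded---Proposition~\ref{prop45} leaves four admissible values of $n_X$. So there is no ``paper's own proof'' to compare against, and your proposal should be read as a strategy toward an open problem rather than a proof.

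That said, your outline has several genuine gaps beyond those you flag. First, the reduction to $q_X(\lll,\mm)=1$ is not a matter of ``replacing $\mm$ by an appropriate integral class'': the paper's own arguments in dimension~$6$ require nontrivial case analysis (using Lemma~\ref{le1} and divisibility by~$C_3$) just to rule out $q_X(\lll,\mm)=2$, and no general mechanism is known. Second, Guan's Betti-number bounds in~\cite{gua} are specific to dimension~$4$; invoking them for arbitrary~$n$ is unjustified. Third, the hoped-for ``divisibility miracles'' ruling out $2\le a\le n$ are precisely what the paper cannot achieve even for $n=3$, $a=2$; the appendix positivity decomposition constrains coefficients but does not by itself force $a=1$. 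Finally, your classification step presupposes Conjecture~\ref{conjnum}, which is itself open for $n\ge 3$, so even the passage from $a=1$ to the Riemann--Roch polynomial~\eqref{valp} is not available. You correctly identify the lattice-to-geometry step as the principal barrier, but the numerical step is also far from complete.
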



Again when $n=3$, we get in Proposition~\ref{prop45} a much weaker result in the case $a=2$ (which, according to Conjecture~\ref{c2}, should not occur at all).

\begin{prop} 
Let $X$ be a \hk\ manifold of dimension $6$ with  classes $\lll,\mm\in H^2(X,\Z)$ such that
$
\int_X\lll^{6}=0$ and $\int_X\lll^3\mm^3=2\cdot 3!
$.\ We have $q_X(\lll,\mm)=1$, 
  the quadratic form~$q_X$ is even,  the Fujiki constant  $c_X$ is $30$, and
  $$P_{RR,X}(T)=
   \frac{1}{24}T^3+\frac{n_X}{8}T^2+
 \bigl(\frac{4}{n_X}+\frac{n_X^2}{12}\bigr)T
  +4,$$
  where $n_X \in\{1,2,3,4 \}$.
\end{prop}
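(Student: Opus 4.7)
The plan is to follow the same strategy as the proof of Proposition~\ref{prop43}, with Chen Jiang's decomposition of $P_{RR,X}$ from the appendix providing the key structural input in the case $a=2$.

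From $\int_X\ell^6 = c_X q_X(\ell)^3 = 0$ and $c_X>0$ we get $q_X(\ell)=0$. Polarizing the Fujiki relation~\eqref{defqx} applied to $\ell+tm$, with $q := q_X(\ell,m)$ and $Q := q_X(m)$, its right-hand side equals $c_X(2tq+t^2Q)^3$. Matching coefficients against $\sum_{k=0}^{6}\binom{6}{k}t^k\int_X\ell^{6-k}m^k$ gives $\int_X\ell^5m = \int_X\ell^4m^2 = 0$ (from $t,t^2$) and $c_X q^3 = 30$ (from $t^3$, using $\int_X\ell^3m^3 = 12$). By deformation invariance of $c_X$, $q_X$, and $P_{RR,X}$, we may assume that $\ell$ and $m$ are Hodge classes, hence first Chern classes of line bundles $L$ and $M$; after replacing $m$ by $-m$ if needed, we assume $q>0$.

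Write $P_{RR,X}(T) = \frac{c_X}{720}T^3 + \beta T^2 + \gamma T + 4$. Integrality of $\chi(X,L^a\otimes M^b) = P_{RR,X}(2abq + b^2Q)$ for all $(a,b)\in\Z^2$ imposes a family of divisibility conditions on $(\beta,\gamma,q,Q)$. Combining these with Jiang's decomposition from the appendix --- which writes $P_{RR,X}$ as a nonnegative linear combination of explicit integer-valued polynomials with positive coefficients --- and with the equation $c_X q^3 = 30$, one eliminates every candidate rational value of $q$ except $q = 1$, yielding $q_X(\ell,m) = 1$ and $c_X = 30$.

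With $q = 1$ and $c_X = 30$ fixed, the remaining pair $(\beta,\gamma)$ is parametrized by a single integer $n_X$. Evenness of $q_X$ then follows exactly as in the proof of Proposition~\ref{prop43}: for every admissible $n_X$, one exhibits an odd integer $d$ at which $P_{RR,X}(d)$ (or $P_{RR,X}(d + 2ks)$ for suitable translates by $\ell$) fails to be an integer, which precludes the existence of any class $\alpha \in H^2(X,\Z)$ with $q_X(\alpha)$ odd. A final integrality check on even arguments, sharpened once more by Jiang's nonnegativity, restricts $n_X$ to the four values $\{1,2,3,4\}$ and yields the announced formula.

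The main obstacle is the third paragraph: Jiang's appendix is essential to eliminate the rational values $q \in \{\tfrac12, \tfrac32, 2, \ldots\}$ compatible with $c_X q^3 = 30$. The subsequent enumeration of admissible $n_X$, and the verification of evenness, are finite but require careful case analysis.
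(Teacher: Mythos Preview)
Your outline has the right overall shape (restrict $q_X(\lll,\mm)$, then $n_X$, then prove evenness), but the specific attributions are wrong and hide real gaps.\ First, $q_X(\lll,\mm)$ is automatically a positive \emph{integer}: $q_X$ is an integral bilinear form on $H^2(X,\Z)$, so there are no ``rational values $q\in\{\tfrac12,\tfrac32,\dots\}$'' to eliminate.\ The actual candidates are $q\in\Z_{>0}$ with $c_Xq^3=30$.\ The paper restricts to $q\in\{1,2\}$ not via Jiang's appendix but via Lemma~\ref{le1} (the $C_n$-divisibility of the leading coefficient, Proposition~\ref{prop12}); and then rules out $q=2$ by showing directly, from the formula~\eqref{prr3}, that every value of $q_X$ would have to be divisible by $4$, contradicting nondivisibility.\ Nothing in Jiang's decomposition gives this divisibility information.

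Second, once $q=1$ and $c_X=30$, you assert that ``the remaining pair $(\beta,\gamma)$ is parametrized by a single integer $n_X$'', but two ingredients are missing.\ That $(\beta,\gamma)$ depends only on $n_X$ is the content of Corollary~\ref{coro23} (equivalently the symmetry~\eqref{symmp}); that $n_X$ is an \emph{integer} is Lemma~\ref{le2}, which you do not invoke.\ The upper bound on $n_X$ then comes from $A_X<1$ (\eqref{td121}, giving \eqref{mxroot}); for $n=3$ this is indeed equivalent to Jiang's $b_1>0$, but you should say so explicitly rather than gesturing at the appendix.\ Finally, ruling out $n_X=5$ and proving evenness are done by the divisibility analysis of $P_{RR,X}(q)$ as in~\eqref{eqdiv3cc}, with a short lattice argument (using translates by $\lll$ and $\mm$) to exclude residual odd residues --- this part you sketch correctly.\ In sum: replace the appeals to ``Jiang's decomposition'' with Lemma~\ref{le1}, Lemma~\ref{le2}, and the bound~\eqref{mxroot}, and carry out the $q=2$ elimination explicitly.
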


 \section{A symmetry property for the Huybrechts--Riemann--Roch polynomial }\label{sec:intro}

Let $X$ be a \hk\ manifold of dimension $2n$.\  In \cite[Definition~17]{nw} (see also  \cite[Definition~2.2]{jia}), Nieper-Wi\ss kirchen defined another quadratic form $\lambda_X$ on~$H^2(X,\R)$ (which is {\em not} integral on~$H^2(X,\Z)$).\ It satisfies (see~\cite[(5.18)]{nw})
\begin{equation}\label{fuj2}
\forall \alpha\in H^2(X,\Z)\qquad \frac{1}{(2n)!} \int_X \alpha^{2n}=A_X \lambda_X(\alpha)^n,
\end{equation}
where $A_X\coloneqq\int_X\td^{1/2}(X)$.\ 
By \cite[Proposition~10]{nw} and \cite[Proposition~2.3]{jia}, one can write
\begin{equation*}
q_X =m_X  \lambda_X,
\end{equation*}
where $m_X$ is a positive rational number, so that (compare~\eqref{defqx} and~\eqref{fuj2})
\begin{equation}\label{cxax}
c_X=\frac{(2n)!A_X}{m_X^n}. 
\end{equation}
We will also set $n_X\coloneqq 2m_X$.\ When $n>1$, one has (\cite[Section~6]{hs}, \cite[Corollary~5.5]{jia})
\begin{equation}\label{td121}
0< A_X <1.
\end{equation}

%


The Hirzebruch--Riemann--Roch theorem~\eqref{hrr}   takes the form 
\begin{equation}\label{hrrq}
\chi(X,L)=\int_X\td(X)\exp (c_1(L))=Q_{RR,X}(\lambda_X(c_1(L))) ,
\end{equation}
where $Q_{RR,X}(T)=P_{RR,X}(m_XT)$.\ 
%
The polynomial $Q_{RR,X}(T)$ was computed in \cite[Theorem~5.2]{nw} in terms of the Chern numbers of $X$.\  The formula is
\begin{equation}\label{nwiss}
Q_{RR,X}(T) = \int_X \exp\Bigl(-\sum_{k=1}^{+\infty}\frac{B_{2k}}{2k}  \ch_{2k}(X)T_{2k}\Bigl(\sqrt{\tfrac14T+1}\Bigr)\Bigr),
\end{equation}
where
\begin{itemize}
\item the $B_{2k}$ are the Bernoulli numbers;
\item the $\ch_{2k}\in H^{2k,2k}(X)$ are the homogeneous components of the Chern character of $X$;
\item the $T_{2k}(Y)$ are the (even) Chebyshev polynomials, defined by $T_{2k}( \cos \theta)=\cos 2k \theta$.
\end{itemize}

\bigskip
 
This formula implies curious symmetry relations for the  polynomials $P_{RR,X}(T)$ and $Q_{RR,X}(T)$ for which we have no geometric explanations.

\begin{prop}\label{prop15}
Let $X$ be a \hk\ manifold of dimension $2n$.\  The polynomial $Q_{RR,X}(T)$  satisfies the symmetry relation 
\begin{equation}\label{symm}
Q_{RR,X}(-T-4)=(-1)^n Q_{RR,X}(T).
\end{equation}
Equivalently, 
\begin{equation}\label{symmp}
P_{RR,X}(-T-2n_X)=(-1)^n P_{RR,X}(T).
\end{equation}
\end{prop}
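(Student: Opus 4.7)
The plan is to work directly with the explicit formula \eqref{nwiss} and verify the symmetry \eqref{symm} term-by-term after expanding the exponential, reducing everything to an elementary identity for the Chebyshev polynomials $T_{2k}$.

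The key step is to show that, under the substitution $T\mapsto -T-4$, each factor $T_{2k}\bigl(\sqrt{\tfrac14 T+1}\bigr)$ is multiplied by $(-1)^k$. Since $T_{2k}$ is an even polynomial, there is a polynomial $\tilde T_{2k}$ with $T_{2k}(\sqrt{Y})=\tilde T_{2k}(Y)$, and the substitution sends $Y=\tfrac14 T+1$ to $1-Y$. So it suffices to establish the identity $\tilde T_{2k}(1-Y)=(-1)^k\tilde T_{2k}(Y)$. Parametrizing $Y=\cos^2\theta$, this follows from
\[
\tilde T_{2k}(\sin^2\theta)=T_{2k}\bigl(\cos(\tfrac{\pi}{2}-\theta)\bigr)=\cos(k\pi-2k\theta)=(-1)^k\cos(2k\theta)=(-1)^k\tilde T_{2k}(\cos^2\theta),
\]
and an identity of polynomials that holds on a Zariski dense set holds identically.

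Next I would expand the exponential in \eqref{nwiss} as a sum over multi-indices $(a_k)_{k\geq 1}$ of terms proportional to $\prod_k\ch_{2k}(X)^{a_k}$. The integral $\int_X$ kills every term whose cohomological degree differs from $2n$, so only multi-indices satisfying $\sum_k k\cdot a_k=n$ contribute. Under $T\mapsto -T-4$, such a term is multiplied by $\prod_k(-1)^{k\cdot a_k}=(-1)^{\sum_k k\cdot a_k}=(-1)^n$, which is the overall sign predicted by \eqref{symm}.

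Finally, \eqref{symmp} is equivalent to \eqref{symm} via the relations $Q_{RR,X}(T)=P_{RR,X}(m_X T)$ and $n_X=2m_X$: substituting $T\mapsto T/m_X$ in \eqref{symm} yields $P_{RR,X}(-T-4m_X)=(-1)^n P_{RR,X}(T)$, and $4m_X=2n_X$. There is no serious obstacle; the only subtlety is in reading $T_{2k}(\sqrt{Y})$ as an honest polynomial in $Y$, which is automatic by evenness and removes any branch-of-square-root issue in the formal substitution.
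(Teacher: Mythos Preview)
Your proof is correct and follows essentially the same route as the paper's: both establish the Chebyshev identity $T_{2k}\bigl(\sqrt{\tfrac14 T+1}\bigr)\mapsto(-1)^k\,T_{2k}\bigl(\sqrt{\tfrac14 T+1}\bigr)$ under $T\mapsto -T-4$ via a trigonometric parametrization, then expand the exponential in \eqref{nwiss} and use that only terms of total weight $n$ survive the integral $\int_X$, giving the global sign $(-1)^n$. Your explicit observation that $T_{2k}$ is even (so that $T_{2k}(\sqrt{Y})$ is an honest polynomial in $Y$ and no branch issues arise) is a point the paper leaves implicit.
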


When $n$ is odd, $-n_X$ is therefore a negative rational root of $P_{RR,X}(T)$.\ In all known examples, it is actually an integer (see 
also Lemma~\ref{le2}).

\begin{proof}
Let $P_k$ be the degree $k$ polynomial such that $P_k(T)=T_{2k}\Bigl(\sqrt{\tfrac14T+1} \Bigr)$.\  Set $\cos \theta\coloneqq \sqrt{\tfrac14T+1}$, so that $T=4(\cos^2\theta-1)=-4\sin^2\theta$.\ We compute 
\begin{multline*}
P_k(-T-4)=T_{2k}\Bigl(\sqrt{-\tfrac14T } \Bigr)=T_{2k}(\sin \theta) =T_{2k}( \cos( \theta-\tfrac{\pi}{2}))=\cos(2k( \theta-\tfrac{\pi}{2}))\\
{}=(-1)^k\cos 2k\theta=(-1)^kT_{2k}(\cos\theta)=(-1)^kT_{2k}\Bigl(\sqrt{\tfrac14T+1} \Bigr)=(-1)^kP_k(T).
\end{multline*}
By~\eqref{nwiss}, the polynomial $Q_{RR,X}(T)$ is a $\Q$-linear combination of polynomials of the type
\begin{equation*} 
P_{k_1}(T)  \cdots P_{k_r}(T)  \int_X    \ch_{2k_1}(X)\cdots  \ch_{2k_r}(X) 
\end{equation*}
for $k_1+\cdots +k_r=n$.\  The proposition therefore follows.
\end{proof}

 \begin{rema}\label{rema22}
 The symmetry relation~\eqref{symmp} implies that the polynomial $P_{RR,X}(T)$ is a linear combination with rational coefficients of the polynomials  $(T+n_X)^{n-2j}$, for $0\le j\le n/2$.\ Since its leading coefficient is $\frac{c_X}{(2n)!}$, we can write
 \begin{equation}\label{jc}
\begin{aligned} 
P_{RR,X}(T)&=\frac{c_X}{(2n)!} (T+n_X)^n +O(T^{n-2})\\
Q_{RR,X}(T)=P_{RR,X}(m_XT) &= A_X( T^n+2n T^{n-1} )+ O(T^{n-2}).
\end{aligned}
\end{equation}
The first two coefficients of $P_{RR,X}$ therefore determine $m_X$, $A_X$, and 
$c_X$ (see also \cite[Lemma~5.7]{jia}).

 Chen Jiang proves in Appendix~\ref{secapp}  that the polynomial $ Q_{RR,X}(T) $ is     a linear combination with {\em nonnegative} rational coefficients of the  polynomials
 $$Q_k(T)\coloneqq\sum_{j=0}^{k}\binom{k+j+1}{2j+1}T^j
 $$
 for $0\le k\le n$ and $n-k$ even.\ These polynomials satisfy the relation~\eqref{symm},\footnote{One has
 $Q_k\bigl(T+\tfrac1{T}-2\bigr) =\sum_{j=0}^kT^{2j-k}$.\ 
In particular, the polynomials $Q_k(T)$  satisfy~\eqref{symm} (change $T$ into~$-T$) and the roots of $Q_k(T)$ are the $k$ negative real numbers $-4\sin^2\frac{j\pi}{2(k+1)}$ for $1\le j\le k$, so that
$$Q_k(T)=\prod_{1\le j\le k}\Bigl(T+ 4\sin^2\frac{j\pi}{2(k+1)}\Bigr)
.$$
} so this much stronger  result implies Proposition~\ref{prop15}. 
  \end{rema}

\begin{coro}\label{coro23}
When $n=3$, one has
\begin{equation}\label{prr3} 
P_{RR,X}(T)= \frac{c_X}{720}(T+n_X)^3 +\Bigl(\frac{4}{n_X}-\frac{c_X}{720}n_X^2\Bigr) (T+n_X).
\end{equation}
\end{coro}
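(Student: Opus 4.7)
The plan is to apply the symmetry relation~\eqref{symmp} directly in the special case $n=3$, and then pin down the two unknown coefficients using the known leading coefficient and the known constant term of $P_{RR,X}(T)$.

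First I would specialize Remark~\ref{rema22}. For $n=3$, the symmetry says $P_{RR,X}(-T-2n_X)=-P_{RR,X}(T)$, so after the substitution $U=T+n_X$ the polynomial $f(U)\coloneqq P_{RR,X}(U-n_X)$ satisfies $f(-U)=-f(U)$. Hence $f$ is an odd polynomial of degree $3$, and therefore
\[
P_{RR,X}(T)=\alpha\,(T+n_X)^3+\beta\,(T+n_X)
\]
for some rationals $\alpha,\beta$. Equivalently, this is the expansion in the basis $\{(T+n_X)^{n-2j}\}_{0\le j\le n/2}$ pointed out in Remark~\ref{rema22}, specialized to $n=3$.

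Next I would determine $\alpha$ and $\beta$. The leading coefficient of $P_{RR,X}(T)$ is $\frac{c_X}{(2n)!}=\frac{c_X}{720}$, which forces $\alpha=\frac{c_X}{720}$. The constant term of $P_{RR,X}(T)$ equals $n+1=4$ by the general normalization recorded just after~\eqref{hrr}, so evaluating at $T=0$ gives
\[
4=\alpha\,n_X^3+\beta\,n_X,
\]
hence $\beta=\frac{4}{n_X}-\frac{c_X}{720}n_X^2$. Substituting $\alpha$ and $\beta$ back yields precisely~\eqref{prr3}.

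There is no real obstacle here: the only non-trivial input is the symmetry~\eqref{symmp} (already proved in Proposition~\ref{prop15}); the remainder is a two-parameter fit using the leading and constant coefficients, which are both universal. The same strategy works for any odd $n$, producing an expression for $P_{RR,X}(T)$ in terms of $\lfloor n/2\rfloor+1$ unknown coefficients; for $n=3$ this number is $2$, which is exactly what makes the answer so clean.
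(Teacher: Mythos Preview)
Your proof is correct and follows essentially the same approach as the paper: invoke the symmetry (equivalently Remark~\ref{rema22}) to write $P_{RR,X}(T)=\frac{c_X}{720}(T+n_X)^3+b(T+n_X)$, then solve for $b$ via $P_{RR,X}(0)=4$. The only difference is cosmetic---you re-derive the odd-polynomial form via the substitution $U=T+n_X$, whereas the paper quotes Remark~\ref{rema22} directly.
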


\begin{proof}
By Remark~\ref{rema22}, we can write
 $$P_{RR,X}(T)=\frac{c_X}{720}(T+n_X)^3+b(T+n_X), 
$$
where $b$  satisfies
$$\frac{c_X}{720}n_X^3+ bn_X=P_{RR,X}(0)
=4,$$
which gives the desired value for $b$.
\end{proof}

For all known examples of \hKm s $X$ of dimension $2n$, one has
\begin{equation*}
P_{RR,X}(T)= \binom{\frac12 T+1+n}{n}\qquad\textnormal{or}\qquad  
 (n+1)\binom{\frac12 T+n}{n}.
\end{equation*}
The roots of both of these polynomials are negative integers (this was conjectured to hold for all \hKm s in \cite[Conjecture~1.3]{jia}).\ In the next two remarks, we discuss what can be said about the reality of the  roots of the polynomial $P_{RR,X}(T)$ (or, equivalenty, of  $Q_{RR,X}(T)$) in dimensions 4 and 6 (when real, the roots are  negative, since both polynomials have positive coefficients).

 \begin{rema}[Real roots, $n=2$]
\label{en2}
When $n=2$, by~\eqref{jc}, we have
\begin{equation*}
Q_{RR,X}(T) = A_X(T^2+4 T)+3.
\end{equation*}
Easy computations (\cite[Lemma~4.1]{dhmv}) based on \cite[Main Theorem]{gua} give that
%
%
\begin{itemize}
\item either $b_2(X) =23$ and  $b_3(X) =0$, 
in which case 
 $A_X=\frac{25}{32}$, 
\item or $b_2(X) \le 8$, in which case 
$ \frac{5}{6}\le A_X\le \frac{131}{144}$.
\end{itemize}
In particular, the  discriminant 
$
4A_X(4A_X-3)$
of the polynomial $Q_{RR,X}(T)$ is positive, hence its roots are real. 
\end{rema}

  \begin{rema}[Real roots, $n=3$]
 When $n=3$, we have by Remark~\ref{rema22}
 $$Q_{RR,X}(T) =(T+2)(A_X(T^2+4T)+2).$$
 The roots of this polynomial  are all real if and only if   the discriminant
 $$16A_X^2-8A_X=8A_X(2A_X-1)$$
  is nonnegative, that is, if and only if $A_X\ge  \frac12$.\ The inequality $A_X>  \frac12$ is equivalent to the inequality (2) in \cite{beso}. It implies an upper   bound on $b_2(X)$.\ If $A_X\le  \frac12$, the class $c_2(X)$ is not in  the image of the morphism $\Sym^2\!H^2(X,\Q)\to H^4(X,\Q)$ (the Verbitsky component).

 \end{rema}

%
%
%
%

\section{Coefficients of the Huybrechts--Riemann--Roch polynomial}

For each positive integer $n$, we define the positive integer  
\[
C_n\coloneqq\gcd_{r_0,\dots,r_n\in \Z} \prod_{0\le j<k\le n}(r_j^2-r_k^2).
\]
One computes easily $C_1=1$, $C_2=12$, and, with a computer,\footnote{Many thanks to Jieao Song for making these computations.\ For any positive integer $n$, the   primes $p$ that divide~$C_n$ are exactly those such that $p\le  2n-1$ (this is because one can find $n+1$ distinct squares modulo $p$ if and only if $p>2n$).} 
\begin{align*}
C_3&=2^5\cdot 3^3\cdot 5,\\
C_4&= 2^{11} \cdot 3^5 \cdot 5^2 \cdot 7,\\
C_5&= 2^{18} \cdot 3^9 \cdot 5^4 \cdot 7^2,
\\
C_6&= 2^{27} \cdot 3^{14} \cdot 5^6 \cdot 7^3 \cdot 11,
\\
C_7&= 2^{37} \cdot 3^{19} \cdot 5^8 \cdot 7^5\cdot 11^2\cdot 13.
\end{align*}
Let $X$ be a \hk\ manifold of dimension $2n$.\  We write the Huybrechts--Riemann--Roch polynomial as
   \begin{equation*}
 P_{RR,X}(T) =:a_nT^n+\cdots +a_1T+a_0,
 \end{equation*}
 where $a_n= \frac{c_X}{(2n)!}$ and $a_0=n+1$.\ The proof of the following proposition uses the fact that the polynomial $ P_{RR,X}(T)$ takes integral values on every integer represented by $q_X$: this is because of the relation~\eqref{hrr} and the fact that, for every $\alpha\in H^2(X,\Z)$, there is a deformation of $X$ on which $\alpha$ becomes the first Chern class of a line bundle.

\begin{prop}\label{prop12}
Let $X$ be a \hk\ manifold of dimension $2n$.\  
For each $i\in\{0,\dots,n\}$, the coefficient $a_i$ of the polynomial $P_{RR,X}(T)$ belongs to $\frac{1}{2^iC_n}\Z$ (and to $\frac{1}{C_n}\Z$ if the quadratic form~$q_X$ is not even).\ 
In particular, the Fujiki constant $c_X$ is in $ \frac{(2n)!}{2^nC_n}\Z$.
\end{prop}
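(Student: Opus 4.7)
The plan is to use the italicized fact that $P_{RR,X}(T)$ takes integer values on every integer represented by $q_X$.\ Fix $\beta\in H^2(X,\Z)$ and set $q_0\coloneqq q_X(\beta)$; since $q_X(r\beta)=q_0r^2$ for any $r\in\Z$, we obtain
\[
P_{RR,X}(q_0 r^2)=\sum_{i=0}^{n}a_iq_0^{i}\,r^{2i}\in\Z \qquad\textnormal{for every } r\in\Z.
\]
For any $(r_0,\dots,r_n)\in\Z^{n+1}$, the resulting $n+1$ integer equations form a Vandermonde system in the unknowns $a_iq_0^i$, with determinant $\prod_{j<k}(r_k^2-r_j^2)$.\ Cramer's rule shows that this determinant times each $a_iq_0^i$ is an integer.\ Since by definition $C_n=\gcd_{(r_0,\dots,r_n)\in\Z^{n+1}}\prod_{j<k}(r_k^2-r_j^2)$, a B\'ezout combination of finitely many such relations yields
\[
C_n\cdot a_iq_0^i\in\Z
\]
for every $i$ and every $\beta$.

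Now vary $\beta$.\ A second B\'ezout combination gives $C_n\cdot a_i\cdot g_i\in\Z$, where $g_i\coloneqq\gcd\{q_X(\beta)^i:\beta\in H^2(X,\Z)\}$.\ A $p$-adic valuation computation at each prime shows $g_i=g_0^i$, where $g_0\coloneqq\gcd\{q_X(\beta):\beta\in H^2(X,\Z)\}$, so the question reduces to identifying $g_0$.\ Applying the polarization identity $q_X(\alpha+\beta)-q_X(\alpha)-q_X(\beta)=2q_X(\alpha,\beta)$ together with the nondivisibility of the bilinear form $q_X$, one sees that $g_0\mid 2$; evidently $g_0=2$ if and only if $q_X$ is even, and $g_0=1$ otherwise.\ This yields $a_i\in\tfrac{1}{g_0^iC_n}\Z$, which is exactly the claim.\ The statement on the Fujiki constant follows from $c_X=(2n)!\,a_n$ in the case $i=n$.

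The main obstacle is mostly bookkeeping: setting up the two successive B\'ezout reductions (over tuples of squares $r_j^2$, then over elements $\beta$), and verifying the two elementary arithmetic inputs — the multiplicativity $\gcd\{q^i\}=(\gcd\{q\})^i$, and the fact that nondivisibility of the bilinear form forces $g_0\mid 2$ via polarization.
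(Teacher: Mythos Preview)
Your proof is correct and follows essentially the same approach as the paper's: evaluate $P_{RR,X}$ at the represented values $r^2q_X(\beta)$, solve the resulting Vandermonde system by Cramer's rule to get $C_n\,a_i\,q_X(\beta)^i\in\Z$, then vary $\beta$ and use that nondivisibility of the bilinear form forces the gcd of represented values to be $1$ or $2$. The only difference is that you spell out explicitly the intermediate steps $\gcd\{q^i\}=(\gcd\{q\})^i$ and the polarization argument for $g_0\mid 2$, both of which the paper leaves implicit.
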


\begin{proof}
Let $q $ be an integer represented by $q_X$.\ 
For all $r_0,\dots,r_n\in \Z$, the integers $r_0^2q,\dots,r_n^2q$ are also represented by $q_X$, so that $P_{RR,X}(r_j^2q)=\sum_{i=0}^n  a_i r_j^{2i}q^i$ is an integer for all $j\in\{0,\dots,n\}$.\ 
The corresponding linear system with unknowns $a_0q^0,\dots,a_nq^n$ has a   Vandermonde matrix $(r_j^{2i})$, so we get
\[
a_iq^i\prod_{0\le j<k\le n}( r_j^2- r_k^2)   \in\Z
\]
for all $i\in\{0,\dots,n\}$, which implies $a_iq^iC_n\in\Z$.\ 
Since the  integral bilinear form associated with $q_X$ is  not  divisible, the gcd of all integers $q$ represented by $q_X$ is either $2$ (if the form $q_X$ is even) or $1$ (if it is not) and the proposition follows.
\end{proof}

In particular, we get $c_X\in\frac12\Z$ when $n=2$, and $c_X\in\frac1{48}\Z$ when $n=3$.\ For any $n$, Proposition~\ref{prop12} gives the lower bound $c_X\ge \frac{(2n)!}{2^nC_n}$, but what would be really interesting, in order to prove boundedness properties for \hk\ manifolds, would be to find  an upper bound on $c_X$ (see \cite{huyf}).

\begin{rema}\label{rem12}
Assume that $q_X$ represents all large enough even numbers (this is the case for all known examples).\ 
Then $P_{RR,X}(T)$ takes integral values on all large enough even numbers and this implies that its leading coefficient is in $\frac1{n!2^n}\Z$, hence $c_X\in (2n-1)!!\Z$.
\end{rema}

 \section{The Huybrechts--Riemann--Roch polynomial in the presence of an isotropic class}
 
 Let $X$ be a \hk\ manifold of dimension $2n$.\   Assume that there is an isotropic class $\lll\in H^2(X,\Z)$, that is, $q_X(\lll)=0$.\ For any   
  $ \mm\in H^2(X,\Z)$,
 $$a(\mm)\coloneqq\frac{1}{n!}\int_X\lll^n\mm^n$$
 is an integer (\cite[Lemma~2.2]{dhmv}) and 
\begin{equation} \label{cxa}
c_Xq_X(\lll,\mm)^n=a(\mm)\frac{(2n)!}{2^nn!}=a(\mm)(2n-1)!!.
\end{equation}
From now on, we assume $q_X(\lll,\mm)>0$.\ Using~\eqref{cxax} and~\eqref{td121}, we obtain
$$
m_X^n =\frac{(2n)!A_X}{c_X}<\frac{(2n)! }{c_X} = \frac{ 2^nn!q_X(\lll,\mm)^n}{a(\mm) }$$
hence
\begin{equation}\label{mxroot}
m_X < 2q_X(\lll,\mm) \sqrt[\leftroot{-3}\uproot{16}  {\scriptstyle {n}}]{\frac{  n!}{a(\mm) }}.
\end{equation}
 Using the bound $c_X\ge \frac{(2n)!}{2^nC_n}$, we also get $m_X< 2 \sqrt[\leftroot{-1}\uproot{2} n]{C_n}$.

\begin{lemm}\label{le1}
We have
$$
n! q_X(\lll,\mm)^n \mid a(\mm)C_n$$
and,  if $q_X$ is not even,
$$n! 2^n q_X(\lll,\mm)^n \mid a(\mm)C_n.$$

\end{lemm}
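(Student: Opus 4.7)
The plan is to combine two ingredients already at our disposal: the relation~\eqref{cxa} between the Fujiki constant $c_X$, the integer $a(\mm)$, and the pairing $q_X(\lll,\mm)$, and the divisibility constraint on $c_X$ coming from Proposition~\ref{prop12}. No topological or geometric input beyond what is already there is needed; the argument is a one-line algebraic manipulation.

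First, rewrite~\eqref{cxa} using the identity $(2n-1)!! = \frac{(2n)!}{2^n n!}$ to solve for the Fujiki constant:
\[
c_X = \frac{a(\mm)\,(2n)!}{2^n\, n!\, q_X(\lll,\mm)^n}.
\]
Since $q_X(\lll,\mm)>0$ by assumption, this is an honest equality of positive rational numbers.

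Next, invoke Proposition~\ref{prop12}: in general $c_X \in \frac{(2n)!}{2^n C_n}\Z$, and if $q_X$ is not even, $c_X \in \frac{(2n)!}{C_n}\Z$. Substituting the displayed expression for $c_X$ into the first containment, the factor $(2n)!/2^n$ cancels and one is left with
\[
\frac{a(\mm)\,C_n}{n!\, q_X(\lll,\mm)^n}\in\Z,
\]
which is exactly the first divisibility. In the non-even case, the same substitution in $c_X \in \frac{(2n)!}{C_n}\Z$ produces an extra factor of $2^n$ in the denominator, yielding
\[
\frac{a(\mm)\,C_n}{n!\,2^n\, q_X(\lll,\mm)^n}\in\Z,
\]
which is the second assertion.

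There is essentially no obstacle: both Proposition~\ref{prop12} and Equation~\eqref{cxa} are already established, and the proof amounts to rearranging denominators. The only point to double-check is the conversion $(2n-1)!!=(2n)!/(2^n n!)$ and the exponent bookkeeping on the factor of $2^n$, which cleanly accounts for the extra $2^n$ appearing in the non-even case.
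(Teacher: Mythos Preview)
Your proof is correct and follows essentially the same route as the paper's: both combine~\eqref{cxa} with Proposition~\ref{prop12}, the only cosmetic difference being that the paper phrases the divisibility via the leading coefficient $a_n=\frac{c_X}{(2n)!}$ rather than via $c_X$ itself.
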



\begin{proof}
Using~\eqref{cxa}, we get
$$
a_n= \frac{c_X}{(2n)!} =\frac{a(\mm)}{2^nn!q_X(\lll,\mm)^n}.
$$
Then use Proposition~\ref{prop12}.\end{proof}

\begin{lemm}\label{le2}
 We have 
\begin{equation*}\label{eq}
a(\mm) \Bigl( \frac{q_X(\mm)+n_X}{2q_X(\lll,\mm)  }-\frac{n-1}2   \Bigr)     \in \Z.
\end{equation*}
In particular,
$$n_X\in   \Z+ \frac{2q_X(\lll,\mm) }{a(\mm)}\Z $$
so that $n_X $ is an integer when $ a(\mm)\in \{1,2\}$.
\end{lemm}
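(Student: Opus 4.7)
The plan is to apply the deformation-based integrality discussion preceding Proposition~\ref{prop12} to the one-parameter family of classes $\mm+k\lll\in H^2(X,\Z)$, $k\in\Z$, and to combine this with the symmetry~\eqref{symmp} in order to extract an arithmetic constraint on~$n_X$. First I would observe that $q_X(\lll)=0$ yields $q_X(\mm+k\lll)=q_X(\mm)+kr$ with $r\coloneqq 2q_X(\lll,\mm)\in\Z$, so the function $f(k)\coloneqq P_{RR,X}(q_X(\mm)+kr)$ is a polynomial in $k$ of degree $n$ taking integer values on $\Z$; it therefore admits a unique expansion $f(k)=\sum_{j=0}^n c_j\binom{k}{j}$ with every $c_j\in\Z$, and the lemma will be read off from $c_{n-1}$.

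The key ingredient is Remark~\ref{rema22}, which combined with~\eqref{cxa} allows one to write $P_{RR,X}(T)=\frac{a(\mm)}{n!\,r^n}(T+n_X)^n+O(T^{n-2})$, with no $T^{n-1}$-term in the remainder. Expanding $f(k)$ directly in $k$ produces top coefficients $A_n=\frac{a(\mm)}{n!}$ (for $k^n$) and $A_{n-1}=\frac{a(\mm)(q_X(\mm)+n_X)}{(n-1)!\,r}$ (for $k^{n-1}$). Comparing with the two leading terms of $\binom{k}{n}=\frac{1}{n!}(k^n-\tfrac{n(n-1)}{2}k^{n-1}+\cdots)$ and $\binom{k}{n-1}=\frac{1}{(n-1)!}k^{n-1}+\cdots$ then gives $c_n=a(\mm)\in\Z$ and
$$c_{n-1}=\frac{a(\mm)(q_X(\mm)+n_X)}{2q_X(\lll,\mm)}+\frac{(n-1)\,a(\mm)}{2}.$$
Since $c_{n-1}\in\Z$ and $(n-1)a(\mm)\in\Z$, subtracting the latter yields precisely the claimed identity.

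For the ``in particular'', I would clear the denominator $2q_X(\lll,\mm)$ and divide by $a(\mm)$; the integer $-q_X(\mm)$ and the half-integer $(n-1)q_X(\lll,\mm)$ are then absorbed into $\Z+\frac{2q_X(\lll,\mm)}{a(\mm)}\Z$, and for $a(\mm)\in\{1,2\}$ this lattice is contained in~$\Z$, so $n_X$ is an integer. The main obstacle is nothing deeper than the binomial-coefficient bookkeeping; the conceptual input is the symmetry~\eqref{symmp}, which pins the subleading coefficient of $P_{RR,X}$ to $n_X$ via $a_{n-1}=n\,a_n\,n_X$, and hence converts the integrality of $c_{n-1}$ into a genuine constraint on $n_X$ rather than a free parameter.
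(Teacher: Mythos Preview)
Your argument is correct and follows essentially the same route as the paper: both exploit that $k\mapsto P_{RR,X}(q_X(\mm)+2kq_X(\lll,\mm))$ is an integer-valued polynomial in $k$, expand using \eqref{jc} and \eqref{cxa}, and extract the integrality constraint at level $n-1$ via the binomial-coefficient basis (the paper subtracts $a(\mm)\binom{t+n-1}{n}$ rather than $a(\mm)\binom{t}{n}$, which flips the sign of the $\frac{n-1}{2}$ term, but the two conclusions differ by the integer $(n-1)a(\mm)$ and are equivalent). One small slip: $(n-1)q_X(\lll,\mm)$ is an integer, not a half-integer, so the absorption into $\Z+\frac{2q_X(\lll,\mm)}{a(\mm)}\Z$ is even more immediate than you suggest.
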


\begin{proof}
For every $t\in\Z$, the number
$$P(t )\coloneqq P_{RR,X}(q_X(t\lll+\mm))=P_{RR,X}(2tq_X(\lll,\mm)+q_X(\mm))
$$
is an integer.\ We have, using~\eqref{jc} and~\eqref{cxa},
   \begin{align*}\label{jc3}
P(t)&=\frac{c_X}{(2n)!} (2tq_X(\lll,\mm)+q_X(\mm)+n_X)^n +O(t^{n-2})\\
&= \frac{a(\mm) }{q_X(\lll,\mm)^n2^nn!} (2^n q_X(\lll,\mm)^nt^n+ n2^{n-1} q_X(\lll,\mm)^{n-1}(q_X(\mm)+n_X)t^{n-1}\bigr)  +O(t^{n-2})\\
&= \frac{a(\mm)}{  n!}    t^n+  \frac{a(\mm)}{q_X(\lll,\mm) 2 (n-1)!}    (q_X(\mm)+n_X)t^{n-1}  +O(t^{n-2}).
\end{align*}
This is an integer for all $t\in\Z$, hence so is
   \begin{align*} 
P(t)-a(\mm)\binom{t+n-1}{n}&=P(t)- a(\mm)\, \frac{ t^n+\frac{n(n-1)}2 t^{n-1}}{  n!}    +O(t^{n-2})\\
&= \Bigl( \frac{q_X(\mm)+n_X}{2q_X(\lll,\mm)  }-\frac{n-1}2   \Bigr)    \frac{a(\mm)}{ (n-1)!}  t^{n-1}   +O(t^{n-2}) .
\end{align*}
This implies the lemma.
\end{proof}

\subsection{Case  $a(\mm)=1$}  We know from~\cite[Theorem~1.5]{dhmv}
that in dimension 4, this case only  occurs when $X$ is of K3$^{[2]}$ deformation type.\ In particular, $P_{RR,X}(T)$ is then given by~\eqref{valp}.\ We believe (Conjecture~\ref{conjnum}) that the same should happen for any $n\ge2 $ (one would then have   $c_n=(2n-1)!!$ and $n_X=n+3$).\ We study the case $n=3$.

\begin{prop}\label{prop43}
Assume $n=3$ and $a(\mm)=1$.\ Then
$q_X(\lll,\mm)=1$, $c_X=15$, $n_X \in\{2,6 \}$, and the quadratic form $q_X$ is even.\ One also has 
$$P_{RR,X}(T)=
  \frac{1}{48}T^3+\frac{n_X}{16}T^2+
  \frac{13}6T+4
  = \binom{\frac{T}{2}+4}{3}-\frac{6-n_X}{16}T^2$$
and the sublattice $\Z\lll\oplus \Z\mm$ of $(H^2(X,\Z),q_X)$ is  a hyperbolic plane.
\end{prop}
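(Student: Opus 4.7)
From~\eqref{cxa} we have $c_X q^3 = 15$, where $q:=q_X(\lll,\mm)$; Lemma~\ref{le1} therefore gives $q\in\{1,2\}$, with $q=1$ forced already when $q_X$ is not even. The heart of the argument is to exclude $q=2$ in the case when $q_X$ is even. First I note that $\lll$ must be primitive, since otherwise $\lll=k\lll'$ with $k\geq 2$ and $\lll'$ still isotropic would force $a'(\mm)=1/k^3\notin\Z$, contradicting \cite[Lemma~2.2]{dhmv}. Set $d:=\gcd_\alpha q_X(\alpha,\lll)$; then $d\mid q=2$. If $d=1$, pick $\alpha_0$ with $q_X(\alpha_0,\lll)=1$; the family $\alpha_0+t\lll$ has $q_X$-value $q_X(\alpha_0)+2t$, which (since $q_X(\alpha_0)$ is even) sweeps out all even integers, so Remark~\ref{rem12} forces $c_X\in 15\Z$, contradicting $c_X=15/8$. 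If $d=2$, nondivisibility of the bilinear form produces $\gamma\in H^2(X,\Z)$ with $q_X(\gamma)\equiv 2\pmod 4$ (otherwise every $q_X(\alpha)\in 4\Z$, and the bilinear form would be divisible by~$2$); combining Corollary~\ref{coro23} (which expresses $P_{RR,X}$ as a one-parameter family in $n_X$) with integer-valuedness of $P_{RR,X}(4t+s_0)$ in $t$ restricts $n_X$ to a short list, and a direct check then shows $P_{RR,X}(T)$ has nonzero residue modulo~$16$ on every $T\equiv 2\pmod 4$, contradicting $P_{RR,X}(q_X(\gamma))\in\Z$. I expect this $d=2$ subcase to be the main technical obstacle: it is not eliminated by Remark~\ref{rem12} alone and requires the mixed lattice-plus-arithmetic input.

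With $q=1$ and $c_X=15$ secured, Lemma~\ref{le2} gives $n_X\in\Z$, and Corollary~\ref{coro23} presents $P_{RR,X}(T)=\tfrac{1}{48}(T+n_X)^3+\bigl(\tfrac{4}{n_X}-\tfrac{n_X^2}{48}\bigr)(T+n_X)$. Setting $s_0:=q_X(\mm)$, I expand $P(t):=P_{RR,X}(2t+s_0)$ in the binomial basis: integrality of the $\binom{t}{2}$-coefficient forces $s_0+n_X$ to be even, and integrality of the $\binom{t}{1}$-coefficient reduces to $12\bigl(\tfrac{4}{n_X}-\tfrac{n_X^2}{48}\bigr)\equiv 5\pmod 6$. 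The bracket is integral only when $n_X$ is even and $n_X\mid 48$; combined with the bound $A_X=n_X^3/384<1$ (so $n_X\leq 7$), the candidates reduce to $\{2,4,6\}$, and the congruence modulo~$6$ eliminates~$4$. Substituting $n_X\in\{2,6\}$ into Corollary~\ref{coro23} and expanding yields $P_{RR,X}(T)=\tfrac{T^3}{48}+\tfrac{n_X T^2}{16}+\tfrac{13T}{6}+4$; the alternative presentation $\binom{\frac{T}{2}+4}{3}-\tfrac{6-n_X}{16}T^2$ is a routine algebraic check.

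Finally, the evenness of $q_X$ follows from $48\,P_{RR,X}(T)=T^3+3n_X T^2+104T+192$: since $n_X$ is even, the last three terms are even, so $48\,P_{RR,X}(T)$ has the parity of $T^3$ and is odd for every odd $T$; hence $P_{RR,X}(T)\notin\Z$ for odd $T$, and since $P_{RR,X}$ takes integer values on every $q_X(\alpha)$, no odd integer is represented. In particular $s_0$ is even, so $\mm':=\mm-(s_0/2)\lll$ lies in $\Z\lll+\Z\mm$, and a direct computation gives $q_X(\mm')=0$ and $q_X(\lll,\mm')=1$. The Gram matrix of $(\lll,\mm')$ is then $\bigl(\begin{smallmatrix}0&1\\1&0\end{smallmatrix}\bigr)$, so $\Z\lll+\Z\mm=\Z\lll+\Z\mm'$ is a hyperbolic plane.
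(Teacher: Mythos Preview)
Your argument for $q_X(\lll,\mm)=1$ is correct and takes a somewhat different path from the paper. Where the paper imposes integrality of $P_{RR,X}(q)$ at \emph{all} represented values $q$ and reduces (via the relation $48n_X\mid n_X(q+n_X)^3+(192-n_X^3)(q+n_X)$) to the mod-$16$ condition $16\mid q(q+n_X)(q+2n_X)$, followed by a five-way case analysis on $n_X\in\{1,2,3,4,6\}$, you work only with the values $q_X(\mm+t\lll)=s_0+2t$ and extract integrality of the binomial-basis coefficients. Your $\binom{t}{1}$-condition $\tfrac{48}{n_X}-\tfrac{n_X^2}{4}\equiv 5\pmod 6$ (together with the necessary integrality of this quantity) kills $n_X\in\{1,3,4,5,7\}$ in one stroke, and you then deduce evenness of $q_X$ afterward from the parity of $48P_{RR,X}(T)$. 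This is a bit slicker than the paper's case-by-case work; both routes are fine.

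The genuine gap is in your treatment of $q_X(\lll,\mm)=2$. Your $d=1$ subcase is a nice shortcut via Remark~\ref{rem12} (the paper does not use this), but your $d=2$ subcase is only sketched: you assert that ``integer-valuedness of $P_{RR,X}(4t+s_0)$ restricts $n_X$ to a short list'' and that ``a direct check'' on $T\equiv 2\pmod 4$ finishes, without performing either step. These \emph{can} be completed---the $\binom{t}{1}$-coefficient of $P_{RR,X}(4t+s_0)$ gives exactly the same condition as your $q=1$ analysis with $m_X$ in place of $n_X$, forcing $m_X\in\{2,6\}$, and then one verifies directly that $P_{RR,X}(2)\notin\Z$ in both cases---but as written this part is incomplete. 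The paper's approach here is more direct and avoids the $d$-split altogether: since $q_X$ is even, write represented values as $2q$; substituting $T=2q$ into~\eqref{prr3} with $c_X=\tfrac{15}{8}$ yields the relation $48m_X\mid m_X(q+m_X)^3+(192-m_X^3)(q+m_X)$, formally identical to the $q_X(\lll,\mm)=1$ relation with $m_X$ replacing $n_X$. The same case analysis then forces $q$ even, so every represented value lies in $4\Z$, contradicting nondivisibility of the bilinear form. I would recommend adopting this uniform substitution rather than the $d$-split, as it reuses your $q=1$ work wholesale.
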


\begin{proof}
%
 We have $C_3=2^5\cdot 3^3\cdot 5$ and we obtain   from Lemma~\ref{le1}
\begin{equation*}\label{eqdiv3}
q_X(\lll,\mm)^3\mid    2^4\cdot 3^2\cdot 5 \qquad\textnormal{(and $ q_X(\lll,\mm)^3 \mid 2 \cdot 3^2\cdot 5 $ if $q_X$ is not even),}
\end{equation*}
so that $ q_X(\lll,\mm)\in\{1,2\}$ (and $ q_X(\lll,\mm)=1 $ if $q_X$ is not even).
 
 \medskip
\noindent{\bf{Assume  $ q_X(\lll,\mm)=1 $.}}\ We have $c_X=15$ from~\eqref{cxa}, Lemma~\ref{le2} gives $q_X(\mm)+n_X\in 2\Z$, and~\eqref{mxroot} gives $m_X<2\sqrt[3]{6}\sim 3.6 $, so that $n_X \in\{1,2,3,4,5,6,7\}$.\ Furthermore, we have, by Corollary~\ref{coro23}, 
$$P_{RR,X}(T)=\frac{1}{48}(T+n_X)^3+\Bigl(\frac{4}{n_X}-\frac{1}{48}n_X^2\Bigr)(T+n_X).
$$
 For all values $q$ taken by $q_X$, this must be an integer when $T=q$, so that
\begin{equation}\label{eqdiv3b}
48n_X\mid   n_X(q+n_X)^3+ (192- n_X^3 )(q+n_X).
\end{equation}
 In particular, $n_X\mid 192 q$.\ If $n_X\in\{5,7\}$, this implies $n_X\mid  q$, which is impossible because   the gcd of all integers $q$ represented by $q_X$ is either $1$ or $2$.\ Otherwise, $16n_X\mid 192$, hence we obtain
\begin{equation}\label{eqdiv3bb}
16 \mid  (q+n_X)^3 - n_X^2  (q+n_X)=q(q+n_X)(q+2n_X).
\end{equation}
\begin{itemize}
\item When $n_X=1$, the relation~\eqref{eqdiv3bb} is equivalent to  $q\equiv 0,6,8,14,15\pmod{16}$.\ The case $q\equiv  15\pmod{16}$ is impossible since $4q$ is also represented but not in this list, hence $q\equiv 0,6,8,14\pmod{16}$ and $q_X$ is even.\ This contradicts the fact that $q_X(\mm)+n_X$ is even.
\item When $n_X=2$,  the relation~\eqref{eqdiv3bb} is equivalent to   $q $ even.
\item When $n_X=3$, the only possible odd value is $q\equiv 13\pmod{16}$.\ This implies that  $4q\equiv 4\pmod{16}$ should also be represented, but $4$ does not satisfy the relation~\eqref{eqdiv3bb}.\ So $q_X$ is even, which contradicts the fact that $q_X(\mm)+n_X$ is even.
\item When $n_X=4$,  the relation~\eqref{eqdiv3bb} is equivalent to  $4\mid q$, which is impossible because   the gcd of all integers $q$ represented by $q_X$ is either $1$ or $2$. 
\item When $n_X=6$,  the relation~\eqref{eqdiv3bb}  is equivalent to  $q $ even.
 \end{itemize}
All in all, we get $n_X \in\{2,6 \}$ and $q_X$ even.

\medskip

\noindent{\bf{Assume  $ q_X(\lll,\mm)=2$.}}\ The quadratic form $q_X$ is even, we have $c_X=\frac{15}8$ from~\eqref{cxa},   Lemma~\ref{le2} gives $\frac12 q_X(\mm)+ m_X\in 2\Z$, so that $m_X$ is an integer, and~\eqref{mxroot} gives $m_X<4\sqrt[3]{6}<8$, so that $m_X \in\{1,2,3,4,5,6,7\}$.\ As above, we  deduce from~\eqref{prr3} that
 $$ \frac{1}{8\cdot 48}(2q+ 2m_X)^3+\Bigl(\frac{2}{m_X}-\frac{1}{8\cdot 48}4m_X^2\Bigr)(2q+ 2m_X) $$
is an integer for all values $2q$ taken by $q_X$, so that 
$$48m_X\mid    m_X(q+m_X)^3+ (192- m_X^3 )(q+m_X).
$$
This is ``the same'' relation as~\eqref{eqdiv3b} and the discussion above allows us to conclude that $q$ must be even, so that all values taken by $q_X$ are divisible by $4$.\ This is   impossible because   the gcd of all values taken  by $q_X$ is  $2$.\ So this case does not occur. 
%
\end{proof}



\subsection{Case  $a(\mm)=2$}   We believe (Conjecture~\ref{c2})  this case should not     occur for any~$n\ge2$ and we know from~\cite[Theorem~9.3]{dhmv} that it does not when $n=2$.\ We study the case $n=3$.

\begin{prop}\label{prop45}
Assume $n=3$ and $a(\mm)=2$.\ Then,
 $q_X(\lll,\mm)=1$, $c_X=30$, $n_X \in\{1,2,3,4 \}$, and the quadratic form $q_X$ is even.\ One also has $P_{RR,X}(T)=
   \frac{1}{24}T^3+\frac{n_X}{8}T^2+
 \bigl(\frac{4}{n_X}+\frac{n_X^2}{12}\bigr)T
  +4$
and the sublattice $\Z\lll\oplus \Z\mm$ of $(H^2(X,\Z),q_X)$ is a hyperbolic plane.
\end{prop}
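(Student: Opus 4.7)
The plan is to follow the structure of the proof of Proposition~\ref{prop43}. The first step is to apply Lemma~\ref{le1} with $n=3$, $a(\mm)=2$, and $C_3=2^5\cdot 3^3\cdot 5$: this gives $q_X(\lll,\mm)^3\mid 2^5\cdot 3^2\cdot 5$ in general, and $q_X(\lll,\mm)^3\mid 2^2\cdot 3^2\cdot 5$ if $q_X$ is not even. Comparing prime exponents leaves $q_X(\lll,\mm)\in\{1,2\}$, with $q_X(\lll,\mm)=1$ when $q_X$ is odd, and the argument splits into these two cases.

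For the case $q_X(\lll,\mm)=1$: formula~\eqref{cxa} gives $c_X=30$; Lemma~\ref{le2} asserts $n_X\in\Z$ (its main relation being automatic when $a(\mm)=2$); and the bound~\eqref{mxroot} reads $m_X<2\sqrt[3]{3}<3$, so $n_X\in\{1,2,3,4,5\}$. Inserting $c_X=30$ into Corollary~\ref{coro23} and expanding produces the stated expression for $P_{RR,X}(T)$. The remaining task is to impose that $P_{RR,X}(q)\in\Z$ for every integer $q$ represented by $q_X$, which rewrites as
\[
24\,n_X\mid n_X\,q(q+n_X)(q+2n_X)+96\,q.
\]
The value $n_X=5$ is excluded because it forces $5\mid q$ for every represented $q$, contradicting that the gcd of values represented by $q_X$ is $1$ or $2$. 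For each $n_X\in\{1,2,3,4\}$, a routine modular analysis (treating odd and even $q$ separately) shows that $q$ must be even and that every even $q$ is admissible. This yields precisely $n_X\in\{1,2,3,4\}$ with $q_X$ even.

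For the case $q_X(\lll,\mm)=2$: here $q_X$ is automatically even, $c_X=\tfrac{15}{4}$, and Lemma~\ref{le2} forces $q_X(\mm)+n_X$ to be even; since $q_X(\mm)$ is even, so is $n_X$, and $m_X=n_X/2$ is an integer. The bound~\eqref{mxroot} becomes $m_X<4\sqrt[3]{3}<6$, so $m_X\in\{1,2,3,4,5\}$. The key observation is that substituting $T=2q'$ and $n_X=2m_X$ in the formula from Corollary~\ref{coro23} yields
\[
P_{RR,X}(2q')=\frac{(q'+m_X)^{3}}{24}+\Bigl(\frac{4}{m_X}-\frac{m_X^{2}}{24}\Bigr)(q'+m_X),
\]
which is \emph{formally identical} to the expression for $P_{RR,X}(q)$ from the previous case, with $(n_X,q)$ replaced by $(m_X,q')$. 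As $q_X$ is even with gcd of represented values equal to $2$, the integers $q'=q/2$ have gcd $1$. Applying the earlier analysis verbatim then forces either $m_X=5$ (impossible as above) or every $q'$ to be even (impossible because the $q'$ have gcd $1$), so this case does not occur.

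Finally, since $q_X$ is even, $\tfrac12 q_X(\mm)\in\Z$, and the change of basis $\mm\mapsto\mm-\tfrac12 q_X(\mm)\,\lll$ turns the Gram matrix of $\Z\lll\oplus\Z\mm$ into $\left(\begin{smallmatrix}0&1\\1&0\end{smallmatrix}\right)$, showing it is a hyperbolic plane. The main obstacle I anticipate is the case-by-case modular analysis for $n_X\in\{1,2,3,4\}$ in the first case: each value requires a separate elementary check to confirm that $q_X$ must be even and that no additional constraint eliminates it.
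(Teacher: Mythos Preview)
Your overall structure matches the paper's, and your reduction of the case $q_X(\lll,\mm)=2$ to the formally identical problem with $(m_X,q')$ in place of $(n_X,q)$ is exactly how the paper proceeds. The gap is in the first case, and it is precisely the step you flagged as the ``main obstacle'' --- but it is not routine.

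For $n_X\in\{1,3\}$, your claim that the modular analysis alone forces $q$ to be even is false. After using $8n_X\mid 96$, your divisibility condition reduces to $8\mid q(q+n_X)(q+2n_X)$, and this \emph{does} admit odd solutions: $q\equiv 7\pmod 8$ when $n_X=1$ (for instance $7\cdot 8\cdot 9=504$), and $q\equiv 5\pmod 8$ when $n_X=3$ (for instance $5\cdot 8\cdot 11=440$). So integrality of $P_{RR,X}$ on represented values does not, by itself, exclude odd represented values in these two subcases.

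The paper closes this with a genuinely lattice-theoretic argument rather than a modular one. Supposing some $\alpha$ has $q_X(\alpha)$ in the bad odd residue class, one first uses $q_X(k\lll+\mm)=2k+q_X(\mm)$ to see that $q_X(\mm)$ is even and to reduce to $q_X(\mm)=0$; then one expands
\[
q_X(t\lll+u\mm+\alpha)=2tu+2t\,q_X(\lll,\alpha)+2u\,q_X(\mm,\alpha)+q_X(\alpha)
\]
for varying $t,u\in\Z$. All these values are odd and must therefore land in the single allowed odd residue class modulo $8$; choosing $(t,u)=(1,0)$, $(0,1)$, and $(1,1)$ forces incompatible congruences on $q_X(\lll,\alpha)$ and $q_X(\mm,\alpha)$, yielding the contradiction. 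This is the missing ingredient; once you have it, your ``verbatim'' transfer to the second case goes through as well.
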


\begin{proof}
%
%
%
%
%
 We have $C_3=2^5\cdot 3^3\cdot 5$ and we obtain   from Lemma~\ref{le1}
\begin{equation*} 
q_X(\lll,\mm)^3\mid    2^5\cdot 3^2\cdot 5 \qquad\textnormal{(and $ q_X(\lll,\mm)^3 \mid 2^2 \cdot 3^2\cdot 5 $ if $q_X$ is not even),}
\end{equation*}
so that $ q_X(\lll,\mm)\in\{1,2\}$ (and $ q_X(\lll,\mm)=1 $ if $q_X$ is not even).

\medskip

\noindent{\bf{Assume  $ q_X(\lll,\mm)=1 $.}}\ We have $c_X=30$ from~\eqref{cxa}, Lemma~\ref{le2} gives $ n_X\in \Z$, and~\eqref{mxroot} gives $m_X<2\sqrt[3]{3}\sim 2.9 $, so that $n_X \in\{1,2,3,4,5\}$.\ Furthermore, we have, by~\eqref{prr3}, 
$$P_{RR,X}(T)=\frac{1}{24}(T+n_X)^3+\Bigl(\frac{4}{n_X}-\frac{1}{24}n_X^2\Bigr)(T+n_X),
$$
 For all values $q$ taken by $q_X$, this must be an integer when $T=q$, so that
\begin{equation*}
24n_X\mid   n_X(q+n_X)^3+ (96- n_X^3 )(q+n_X).
\end{equation*}
 In particular, $n_X\mid 96 q$.\ If $n_X=5$, this implies $n_X\mid  q$, which is impossible because   the gcd of all integers $q$ represented by $q_X$ is either $1$ or $2$.\ Otherwise, $8n_X\mid 96$, hence we obtain
\begin{equation}\label{eqdiv3cc}
8 \mid  (q+n_X)^3 - n_X^2  (q+n_X)=q(q+n_X)(q+2n_X).
\end{equation}
\begin{itemize}
\item When $n_X=1$, the relation~\eqref{eqdiv3cc} is equivalent to  $q\equiv 0,2,4,6,7\pmod{8}$; this means that every odd value taken by $q_X$ is $\equiv  7\pmod{8}$.\ Assume there exists $\alpha$ such that    $q_X(\alpha)\equiv  7\pmod{8}$.\ Since $q_X(k\lll+\mm)= 2k+q_X(\mm)$, the integer $q_X(\mm)$ must be even and we may even assume $q_X(\mm)=0$.\    For all   $t,u\in\Z$, the integer
$$q_X(t\lll+u\mm+\alpha)=2tu+2tq_X(\lll,\alpha)+2uq_X(\mm,\alpha)+q_X(\alpha)
$$
is odd, hence is $\equiv  7\pmod{8}$.\ This implies $tu+ tq_X(\lll,\alpha)+ uq_X(\mm,\alpha)\equiv 0\pmod4$.\ Taking $t=1$ and $u=0$, we obtain $q_X(\lll,\alpha) \equiv 0\pmod4$; taking $t=0$ and $u=1$, we obtain $q_X(\mm,\alpha) \equiv 0\pmod4$; taking $t=u=1$, we obtain a contradiction.\ Hence $q\equiv 0,2,4,6\pmod{8}$ and $q_X$ is even.\ 
\item When $n_X=2$, the relation~\eqref{eqdiv3cc} is equivalent to  $q\equiv 0,2,4,6\pmod{8}$ and $q_X$ is even.
\item When $n_X=3$, the relation~\eqref{eqdiv3cc} is equivalent to  $q\equiv 0,2,4,5, 6\pmod{8}$.\ If the case $q\equiv 5\pmod{8}$ occurs, the same reasoning as in the case $n_X=1$,  $q\equiv 7\pmod{8}$, gives   a contradiction, hence $q_X$ is even.
\item When $n_X=4$, the relation~\eqref{eqdiv3cc} is equivalent to  $q\equiv 0,2,4,6\pmod{8}$ and $q_X$ is even.
 \end{itemize}

\medskip

\noindent{\bf{Assume  $ q_X(\lll,\mm)=2 $.}}\ The quadratic form $q_X$ is even, we have $c_X=\frac{15}4$ from~\eqref{cxa}, Lemma~\ref{le2} gives $m_X\in   \Z$, and~\eqref{mxroot} gives $m_X<5.8 $, so that $m_X \in\{1,2,3,4,5\}$.\  As above, we  deduce from~\eqref{prr3} that
 $$ \frac{1}{8\cdot 24}(2q+ 2m_X)^3+\Bigl(\frac{2}{m_X}-\frac{1}{8\cdot 24}4m_X^2\Bigr)(2q+ 2m_X) $$
 must be an integer for all values $2q$ taken by $q_X$, so that 
$$24 m_X\mid    m_X(q+m_X)^3+ (96- m_X^3 )(q+m_X).
$$
We reason as above to conclude that the integer~$q$ must be even,  so that all values taken by the quadratic form~$q_X$ are divisible by $4$.\ This is   impossible because   the gcd of all values taken  by $q_X$ is  $2$.\ So this case does not occur. 
 \end{proof}

\appendix

\section{Positivity of the Huybrechts--Riemann--Roch polynomial}\label{secapp}

\smallskip
\begin{center}by \textsc{Chen Jiang}\end{center}
\medskip

\newcommand\sigmabar{{\overline{\sigma}}}
\newcommand{\rounddown}[1]{\lfloor{#1}\rfloor}
\newcommand{\roundup}[1]{\lceil{#1}\rceil}

  Throughout this appendix, $X$ is a \hKm\ of dimension $2n$ and we fix a symplectic form $\sigma\in H^0(X, \Omega^2_X)$.\   The degree $n$  Huybrechts--Riemann--Roch polynomial  $P_{RR,X}(T)$ was defined in the introduction, and the polynomial $Q_{RR,X}(T)=P_{RR,X}(m_XT)$  in Section~\ref{sec:intro}.\ These polynomials were  proved in \cite[Theorem~1.1]{jia} to have positive coefficients.\ The purpose of this appendix is to prove a refinement of this result.\ For every nonnegative integer $k$, we define a degree $k$ monic polynomial with positive coefficients by
 $$Q_k(T)\coloneqq \sum_{j=0}^{k}\binom{k+j+1}{2j+1}T^j
 =T^k+2kT^{k-1}+\cdots + k+1
 .$$
  Our result is the following.
  
  \begin{prop}\label{propa1}
Let $X$ be a \hKm\ of dimension $2n>2$.\ There are nonnegative rational numbers $b_0, b_1, \dots, b_{\lfloor n/2\rfloor}$ 
such that
\begin{equation}\label{eq Q=sum Q}
Q_{RR, X}(T)=\sum_{i=0}^{\lfloor n/2\rfloor}b_iQ_{n-2i}(T).
\end{equation}
Moreover, $b_0=\int_X\td^{1/2}(X)>0$ and $b_1>0$. 
\end{prop}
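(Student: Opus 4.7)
The plan is to substitute $T=y+y^{-1}-2$ into both sides of~\eqref{eq Q=sum Q} and compare Laurent-polynomial coefficients in $y$. First, I would verify uniqueness: each $Q_k$ satisfies $Q_k(-T-4)=(-1)^k Q_k(T)$, as can be seen from the footnote identity $Q_k(y+y^{-1}-2)=\sum_{j=0}^{k} y^{2j-k}$ by replacing $y$ with $-y^{-1}$ (which sends $y+y^{-1}-2$ to $-y-y^{-1}-2=-T-4$ and produces the sign $(-1)^k$). Together with Proposition~\ref{prop15}, this forces $Q_{RR,X}(T)$ to lie in the $(\lfloor n/2\rfloor+1)$-dimensional subspace spanned by $\{Q_{n-2i}\}_{0\le i\le\lfloor n/2\rfloor}$, yielding the unique expansion~\eqref{eq Q=sum Q} with $b_i\in\Q$.

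Next, I would carry out the key computation. Using $P_k(y+y^{-1}-2)=(y^k+y^{-k})/2$ (as in the proof of Proposition~\ref{prop15}), the classical identity $\sum_{k\ge 1}\tfrac{B_{2k}}{2k\,(2k)!}v^{2k}=\log\tfrac{\sinh(v/2)}{v/2}$ applied at $v=x_i\sqrt{y}$ and $v=x_i/\sqrt{y}$, and $c_1(X)=0$ (so $\td(X)=\prod_i \tfrac{x_i/2}{\sinh(x_i/2)}$), the Nieper-Wi\ss kirchen formula~\eqref{nwiss} rewrites as
\[
Q_{RR,X}(y+y^{-1}-2)=\int_X\td^{1/2}(X)|_{x_i\mapsto x_i\sqrt{y}}\cdot\td^{1/2}(X)|_{x_i\mapsto x_i/\sqrt{y}}.
\]
Since the odd Chern classes of $X$ vanish ($T_X\cong\Omega_X^1$ via the symplectic form), $\td^{1/2}(X)$ is a polynomial in $c_2(X),c_4(X),\ldots$ and decomposes as $\sum_{k=0}^{n} S_k$ with $S_k\in H^{4k}(X,\Q)$. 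By homogeneity, $S_k|_{x_i\mapsto x_i\sqrt{y}}=y^k S_k$, so the right-hand side equals $\sum_{k=0}^n e_k\,y^{2k-n}$, where $e_k:=\int_X S_k S_{n-k}$. Matching this against $\sum_i b_i Q_{n-2i}(y+y^{-1}-2)$ yields
\[
b_i=e_i-e_{i-1}\quad (i\ge 1),\qquad b_0=e_0=\int_X\td^{1/2}(X)=A_X,
\]
which in particular settles $b_0=A_X>0$ via~\eqref{td121}.

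The remaining and main task is to establish the unimodality $e_0\le e_1\le\cdots\le e_{\lfloor n/2\rfloor}$, with strict inequality $e_0<e_1$. This is a log-concavity statement for the intersection numbers of the graded pieces of $\td^{1/2}(X)$, and is the main obstacle. My strategy would be to expand each $e_k$, via the Chern root formula $S_k=\bigl[\prod_i(\tfrac{x_i/2}{\sinh(x_i/2)})^{1/2}\bigr]_{4k}$, as a $\Q$-linear combination of Chern numbers $\int_X\ch_{2j_1}(X)\cdots\ch_{2j_r}(X)$; reinterpret these via Rozansky-Witten graph invariants; and invoke the positivity of these invariants (cf.~\cite{jia}) combined with a combinatorial rearrangement showing that shifting a factor between $S_i$ and $S_{n-i}$ to form $S_{i-1}S_{n-i+1}$ can only decrease the integral. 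For the strict inequality $b_1>0$, which reduces to $\tfrac{1}{24}\int_X c_2(X)\cdot[\td^{1/2}(X)]_{4(n-1)}>A_X$, the case $n=2$ follows from $c_2(X)^2>576\,A_X$ (verified using the bounds on $A_X$ in Remark~\ref{en2} and the known Chern number inequalities for hyper-K\"ahler fourfolds), and an analogous argument should settle larger $n$ inductively.
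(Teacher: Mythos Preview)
Your first two paragraphs are correct and attractive: the substitution $T=y+y^{-1}-2$ does convert~\eqref{nwiss} into
\[
Q_{RR,X}(y+y^{-1}-2)=\int_X \Bigl(\sum_{k}S_ky^k\Bigr)\Bigl(\sum_{k}S_ky^{-k}\Bigr)=\sum_{k=0}^{n}e_k\,y^{2k-n},
\qquad e_k=\int_X S_kS_{n-k},
\]
and matching against $\sum_i b_iQ_{n-2i}(y+y^{-1}-2)$ yields $b_i=e_i-e_{i-1}$ with $b_0=e_0=A_X$. So far so good.

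The gap is in your third paragraph, which is the entire content of the proposition. You reduce $b_i\ge 0$ to the monotonicity $e_{i-1}\le e_i$ of the numbers $\int_X S_{i}S_{n-i}$, and then propose to prove this by ``positivity of Rozansky--Witten invariants combined with a combinatorial rearrangement''. Neither ingredient does what you need. There is no general positivity of Rozansky--Witten invariants in~\cite{jia}; what is proved there (Corollary~4.4) is a very specific Hodge--Riemann inequality, namely $\int_X\tp_{2i}^2(\sigma\sigmabar)^{n-2i}\ge 0$ for certain \emph{primitive} modifications $\tp_{2i}$ of $\td^{1/2}_{2i}$. The inequality $e_{i-1}\le e_i$ is a sign statement about a \emph{difference} of Chern numbers, and no rearrangement of factors between $S_i$ and $S_{n-i}$ will produce it, because the individual monomials in Chern classes that appear in $S_iS_{n-i}$ and $S_{i-1}S_{n-i+1}$ are not separately of definite sign. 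Your sketch for $b_1>0$ has the same defect: ``an analogous argument should settle larger $n$ inductively'' is not an argument.

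The paper's proof supplies exactly the idea you are missing. It does not attempt to compare $e_i$ with $e_{i-1}$; instead it identifies $b_i$ directly as the manifestly nonnegative quantity
\[
b_i=\frac{1}{(n-2i)!^2}\cdot\frac{\int_X\tp_{2i}^2(\sigma\sigmabar)^{n-2i}}{\lambda_\sigma^{n-2i}},
\]
by plugging into $Q_{RR,X}(\lambda_\sigma)=\int_X\td(X)\exp(\sigma+\sigmabar)$ the formula from \cite[Proof of Theorem~5.1]{jia} that expresses each $\int_X\td_{2m}(\sigma\sigmabar)^{n-m}$ as a positive combination of the $\int_X\tp_{2i}^2(\sigma\sigmabar)^{n-2i}$. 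The classes $\tp_{2i}$ are constructed precisely so that Hodge--Riemann applies; this orthogonalisation against powers of $\sigma\sigmabar$ is the substantive step, and it has no counterpart in your proposal. The strict inequalities $b_0,b_1>0$ are then read off from \cite[Corollary~5.2]{jia}. In short: your reformulation $b_i=e_i-e_{i-1}$ is correct but does not bypass the Hodge--Riemann input, and your sketch of how to obtain that input does not work.
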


  For any $\alpha\in H^2(X,\R)$, we have
\[
Q_{RR,X}(\lambda_X(\alpha))=\int_X \td(X) \exp(\alpha),
\]
where $\lambda_X$ is the quadratic form  on~$H^2(X,\R)$ discussed in Section~\ref{sec:intro}.\ Indeed, by~\eqref{hrrq}, this equality holds when $\alpha$ is the first Chern class of a line bundle on $X$.\ It then holds for each $\alpha\in H^2(X,\Z)$ because there is a deformation of $X$ on which $\alpha$ becomes the first Chern class of a line bundle.\ Finally, it holds for every $\alpha\in H^2(X,\R)$ since both sides are polynomial functions of $\alpha$.

Moreover, one has (\cite[Definition~17]{nw}, \cite[Definition~2.2]{jia})
\[ 
\lambda_X(\alpha)\coloneqq\begin{cases}\frac{24n \int_X \exp(\alpha)}{ \int_X c_{2}(X) \exp(\alpha)} & \text{if well-defined;}\\ 0 & \text{otherwise.}\end{cases} 
\]
For simplicity, we set $\lambda_\sigma\coloneqq\lambda_X(\sigma+\sigmabar)$. We know that $\lambda_\sigma>0$ (see \cite[Lemma~2.4(2)]{jia}). 

In \cite[Definition~4.1]{jia}, for any $0\leq k\leq n/2$,  
we defined a class 
\[
\tp_{2k}\coloneqq\sum_{i=0}^k\frac{(n-2k+1)!\td^{1/2}_{2i}\wedge(\sigma\sigmabar)^{k-i}}{(-\lambda_\sigma)^{k-i}(k-i)!(n-k-i+1)!}\in H^{4k}(X,\R)\] 
which is of Hodge type $(2k,2k)$.\ 
One important fact is that, by \cite[Corollary~4.4]{jia}, 
$$\int_X\tp_{2k}^2(\sigma\sigmabar)^{n-2k}\geq 0.$$

\begin{lemm}\label{lem C independent}
The numbers
\[
C_k\coloneqq\frac{\int_X\tp_{2k}^2(\sigma\sigmabar)^{n-2k}}{\lambda_{\sigma}^{n-2k}}
\]
are deformation invariants of $X$.\ In particular, $C_k$ is independent of the choice of $\sigma$.
\end{lemm}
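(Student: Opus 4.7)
The approach is to reduce the quantities defining $C_k$ to generalized Fujiki constants attached to characteristic classes of $X$, which are manifestly deformation invariants. Along the way the independence from $\sigma$ will be automatic. Indeed, at any one complex structure, $\sigma$ is determined only up to a nonzero complex scalar $\mu$; under $\sigma\mapsto\mu\sigma$, one has $\sigma\sigmabar\mapsto|\mu|^2\sigma\sigmabar$, and since $q_X(\sigma)=q_X(\sigmabar)=0$, also $\lambda_\sigma=\lambda_X(\sigma+\sigmabar)\mapsto|\mu|^2\lambda_\sigma$. Hence every ratio $(\sigma\sigmabar)^{j}/\lambda_\sigma^{j}$ appearing in $\tp_{2k}$ and in $C_k$ is unaffected, so $C_k$ depends only on the complex structure on $X$.

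For deformation invariance, expand $\tp_{2k}^2$ using its definition to write
\[
\tp_{2k}^2\,\frac{(\sigma\sigmabar)^{n-2k}}{\lambda_\sigma^{n-2k}}
=\sum_{i,j=0}^{k}B_{i,j}\,\td^{1/2}_{2i}\,\td^{1/2}_{2j}\cdot\frac{(\sigma\sigmabar)^{n-i-j}}{\lambda_\sigma^{n-i-j}},
\]
where the coefficients $B_{i,j}\in\Q$ depend only on $n$ and $k$. Since each class $\td^{1/2}_{2i}\,\td^{1/2}_{2j}$ is a characteristic class of $X$ of Hodge type $(2(i+j),2(i+j))$, it suffices to show that for any characteristic class $\gamma\in H^{4m}(X,\R)$ with $0\le m\le n$, the quantity
\[
I(\gamma)\coloneqq\frac{1}{\lambda_\sigma^{n-m}}\int_X \gamma\cdot(\sigma\sigmabar)^{n-m}
\]
is a deformation invariant of $X$.

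To prove this, set $\alpha\coloneqq\sigma+\sigmabar\in H^2(X,\R)$, so that $q_X(\alpha)=m_X\lambda_\sigma$. Because $\gamma$ has Hodge type $(2m,2m)$, only the middle term of the binomial expansion of $\alpha^{2(n-m)}$ contributes to $\int_X\gamma\cdot\alpha^{2(n-m)}$, which gives
\[
\int_X \gamma\cdot\alpha^{2(n-m)}=\binom{2(n-m)}{n-m}\int_X \gamma\cdot(\sigma\sigmabar)^{n-m}.
\]
On the other hand, the generalized Fujiki--Beauville--Bogomolov relation, valid for any SL(2)-invariant class under the Looijenga--Lunts--Verbitsky algebra action, provides a deformation-invariant constant $F(\gamma)$ with
\[
\int_X \gamma\cdot\beta^{2(n-m)}=F(\gamma)\,q_X(\beta)^{n-m}\qquad\forall\,\beta\in H^2(X,\R).
\]
Specializing to $\beta=\alpha$ yields $I(\gamma)=F(\gamma)\,m_X^{n-m}/\binom{2(n-m)}{n-m}$, a deformation invariant; summing over $i,j$ then gives the asserted invariance of $C_k$. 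The main technical point is the extension of Fujiki's relation to arbitrary characteristic classes of $X$: this is standard in the present setting---it follows from the SL(2)-invariance of characteristic classes under the Looijenga--Lunts--Verbitsky action together with the deformation invariance of Chern numbers and of the Beauville form---and is already implicit in Nieper-Wi\ss kirchen's derivation of~\eqref{nwiss}.
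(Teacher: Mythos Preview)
Your argument is correct and follows essentially the same route as the paper: expand $\tp_{2k}^2(\sigma\sigmabar)^{n-2k}/\lambda_\sigma^{n-2k}$ as a finite $\Q$-linear combination of terms $\td^{1/2}_{2i}\td^{1/2}_{2j}(\sigma\sigmabar)^{n-i-j}/\lambda_\sigma^{n-i-j}$, pass from $(\sigma\sigmabar)^{n-i-j}$ to $(\sigma+\sigmabar)^{2(n-i-j)}$ via the Hodge type, and then invoke the generalized Fujiki relation for characteristic classes (the paper cites \cite[Corollary~23.17]{huy} and \cite[Proposition~2.3]{jia} for exactly this step). Your preliminary scaling argument for independence of $\sigma$ at a fixed complex structure is a pleasant sanity check but is subsumed by the deformation invariance, as the paper also notes.
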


Here we remark that we cannot directly apply \cite[Corollary~23.17]{huy} as $\tp_{2k}$ might no longer be of type $(2k,2k)$ on deformations of $X$.

\begin{proof}
By  definition of $\tp_{2k}$, the number $C_k$ can be written as
\[C_k=\sum_{i=0}^k\sum_{j=0}^ka_{ij}\,\frac{\int_X\td^{1/2}_{2i}\td^{1/2}_{2j}(\sigma\sigmabar)^{n-i-j}}{\lambda_{\sigma}^{n-i-j}},\]
where the $a_{ij}$ are constants depending only on $n,k,i,j$.\ By \cite[Corollary~23.17]{huy} and \cite[Proposition~2.3]{jia},  \[\frac{\int_X\td^{1/2}_{2i}\td^{1/2}_{2j}(\sigma\sigmabar)^{n-i-j}}{\lambda_{\sigma}^{n-i-j}}=\frac{(n-i-j)!^2}{(2n-2i-2j)!}\,\frac{\int_X\td^{1/2}_{2i}\td^{1/2}_{2j}(\sigma+\sigmabar)^{2n-2i-2j}}{\lambda_{\sigma}^{n-i-j}}\]
 only depends  on $\td^{1/2}_{2i}\td^{1/2}_{2j}$, $c_2(X)$, and $c_X$, which implies that $C_k$ is a  deformation invariant of $X$. 
\end{proof}

\begin{proof}[Proof of Proposition~\ref{propa1}]
From \cite[Proof of Theorem~5.1]{jia}, for any $0\leq m\leq n$, we have \begin{align*}
 \int_X\td_{2m}(\sigma\sigmabar)^{n-m} 
= \sum_{i=0}^{\rounddown{m/2}} \frac{(n-m)!^2}{\lambda_\sigma^{m-2i} (n-2i)!^2}\binom{2n-2i-m+1}{m-2i}\int_X(\tp_{2i})^2(\sigma\sigmabar)^{n-2i}.
\end{align*}
In other words, 
\begin{align*}
 \int_X\td_{2m}(\sigma+\sigmabar)^{2n-2m} 
= \sum_{i=0}^{\rounddown{m/2}} \frac{(2n-2m)!}{\lambda_\sigma^{m-2i} (n-2i)!^2}\binom{2n-2i-m+1}{m-2i}\int_X(\tp_{2i})^2(\sigma\sigmabar)^{n-2i}.
\end{align*}

Thus we have the following equalities:
 \begin{align*}
\int_X \td(X) \exp(\sigma+\sigmabar)
 =&\sum_{m=0}^n\int_X  \frac{1}{(2n-2m)!}\td_{2m}(X) (\sigma+\sigmabar)^{2n-2m}\\
 =&\sum_{m=0}^n\sum_{i=0}^{\lfloor m/2\rfloor}\frac{1}{\lambda_\sigma^{m-2i}(n-2i)!^2}\binom{2n-2i-m+1}{m-2i}\int_X (\tp_{2i})^2(\sigma\sigmabar)^{n-2i}\\
 =&\sum_{m=0}^n\sum_{i=0}^{\lfloor m/2\rfloor}\frac{1}{(n-2i)!^2}\binom{2n-2i-m+1}{m-2i}C_i \lambda_\sigma^{n-m}\\
=&\sum_{i=0}^{\lfloor n/2\rfloor}\frac{C_i}{(n-2i)!^2} \sum_{m=2i}^n\binom{2n-2i-m+1}{m-2i}\lambda_\sigma^{n-m}\\
=&\sum_{i=0}^{\lfloor n/2\rfloor}\frac{C_i}{(n-2i)!^2} \sum_{m=0}^{n-2i}\binom{2n-4i-m+1}{m}\lambda_\sigma^{n-m-2i}\\
=&\sum_{i=0}^{\lfloor n/2\rfloor}\frac{C_i}{(n-2i)!^2} \,Q_{n-2i}(\lambda_\sigma).
\end{align*}
In other words,  
$$ Q_{RR, X}(\lambda_\sigma)=\sum_{i=0}^{\lfloor n/2\rfloor}\frac{C_i}{(n-2i)!^2} \,Q_{n-2i}(\lambda_\sigma).
$$
Here  $C_i\geq 0$ by \cite[Corollary~4.4]{jia}.\ 
By Lemma~\ref{lem C independent}, $C_i$ is independent of the choice of $\sigma$, so after replacing $\sigma$ by $t\sigma$ for any $t\in \C^\times$, we can get an equality of polynomials 
$$ Q_{RR, X}(T)=\sum_{i=0}^{\lfloor n/2\rfloor}\frac{C_i}{(n-2i)!^2}\, Q_{n-2i}(T),
$$
which gives  the desired equation \eqref{eq Q=sum Q}.

The last assertion is a consequence of \cite[Corollary~5.2]{jia}.
\end{proof}

\end{document}